\newcommand \RR {\mathbb{R}}
\newcommand \ZZ {\mathbb{Z}}
\newcommand \EE {\mathbb{E}}
\newcommand \PP {\mathbb{P}}
\newcommand \QQ {\mathbb{Q}}
\newcommand{\calP}{\mathcal P}
\newcommand{\calF}{\mathcal F}
\newcommand{\bbr}{\RR}
\newcommand{\q}{\quad}
\renewcommand{\a}{\alpha}
\newcommand{\s}{\sigma}
\renewcommand{\th}{\theta}
\renewcommand{\d}{\delta}
\renewcommand{\O}{\Omega}
\renewcommand{\epsilon}{\varepsilon}
\newcommand{\bp}{{\bm{p}}}
\newcommand{\bP}{{\bm{P}}}
\theoremstyle{plain}
\newtheorem{theorem}{Theorem}[section]
\newtheorem{proposition}[theorem]{Proposition}
\newtheorem{lemma}[theorem]{Lemma}
\theoremstyle{definition}
\theoremstyle{remark}
\newtheorem{remark}[theorem]{Remark}
\let\originalleft\left
\let\originalright\right
\renewcommand{\left}{\mathopen{}\mathclose\bgroup\originalleft}
\renewcommand{\right}{\aftergroup\egroup\originalright}
\author{Serge Cohen\footnote{Institut de Math\'ematiques de Toulouse, Université de Toulouse; CNRS, F-31062 Toulouse Cedex 9, France. FirstName.Name@math.univ-toulouse.fr}
	\and James Norris\footnote{Statistical Laboratory, Centre for Mathematical Sciences, Wilberforce Road, Cambridge, CB3 0WB, UK. j.r.norris@statslab.cam.ac.uk}  
	\and Michel Pain\footnotemark[1] %\footnote{Corresponding author: michel.pain@math.univ-toulouse.fr}
	\and Gennady Samorodnitsky\footnote{School of Operations Research and Information Engineering, Cornell University, Ithaca, NY 14853, USA. gs18@cornell.edu}
}
\title{Interlacing sequences resulting from an interval split-merge dynamics and the induced probability measures}
\begin{document}
	\maketitle
	\begin{abstract}
		We study sequences of partitions of the unit interval into subintervals, starting from the trivial partition, in which each partition is obtained from the one before by splitting its subintervals in two, according to a given rule, and then merging pairs of subintervals at the break points of the old partition. The $n$th partition then comprises $n+1$ subintervals with $n$ break points, which inherently possess an interlacing property.
		The empirical distribution of these points reveals a surprisingly rich structure, even when the splitting rule is completely deterministic. We consider both deterministic and randomized splitting rules and we study from multiple angles the limiting behavior of the empirical distribution of the break points.
	\end{abstract}
	
	\noindent \textit{Keywords}: interlacing sequences, splitting and merging intervals, random walk in random environment, Markov chain, limit theorems, large deviations.
	
	% Changer de classification AMS
	\noindent \textit{AMS classification (2020)}: 60J05, 60F05.

\section{Introduction}

\subsection{Definition of the model}

We consider sequences of partitions $(\calP_n)_{n\ge0}$ of the interval $(0,1]$, of the form
$$
\calP_n=\{(a_{n,0},a_{n,1}],\dots,(a_{n,n},a_{n,n+1}]\}
$$
where $a_{n,0}=0$ and $a_{n,n+1}=1$ and the $n$ break points $a_{n,1},\dots,a_{n,n}$ satisfy
$$
0\le a_{n,1}\le\dots\le a_{n,n}\le1.
$$
Thus $\calP_0=(0,1]$ and $\calP_n$ partitions $(0,1]$ into $n+1$ subintervals
$$
(0,1]=(a_{n,0},a_{n,1}]\cup\dots\cup(a_{n,n},a_{n,n+1}].
$$
Suppose we are given a family of splitting proportions 
$$
\bp=(p_{n,k}:n\ge1,\,1\le k\le n)
$$ 
with $p_{n,k}\in[0,1]$ for all $k$.
For $n\ge1$, we define the break points of the partition $\calP_n$ recursively by 
\begin{equation}
\label{eq:induc-rand}
%a_{n,k}=(1-p_{n,k})a_{n-1,k-1}+p_{n,k}a_{n-1,k},\q k=1,\ldots,n.
a_{n,k}=p_{n,k}a_{n-1,k-1}+(1-p_{n,k})a_{n-1,k},\q k=1,\ldots,n.
\end{equation}
Thus, we \emph{split} each interval $(a_{n-1,k-1},a_{n-1,k}]$ of the partition $\calP_{n-1}$ into two subintervals, 
with proportions $1-p_{n,k}$ on the left and $p_{n,k}$ on the right,
and then we \emph{merge} the resulting subintervals at the break points of $\calP_{n-1}$.
This split-merge dynamics is illustrated in Figure~\ref{fig:def}.
\begin{figure}[t]
	\centering
	\begin{tikzpicture}[scale=4]
	\def\r{.015}; % definition of the radius of the dots
	\draw (0,0) -- (1,0);
	\draw[fill,NavyBlue] (0,0) circle (\r) node[below]{0};
	\draw[fill,NavyBlue] (1,0) circle (\r) node[below]{1};
	\draw[NavyBlue] (.5,.12) node{$\calP_0$};
	\draw[->,>=latex] (1.3,0) -- (1.7,0);
	\draw (1.5,0) node[above]{\small split};
	\draw[->,>=latex] (1.7,-.2) -- (1.3,-.5);
	\draw (1.5,-.35) node[above,rotate=37]{\small merge};
	\begin{scope}[shift={(2,0)}]
	\draw (0,0) -- (1,0);
	\draw[fill,NavyBlue] (0,0) circle (\r) node[below]{0};
	\draw[fill,BrickRed] (1/3,0) circle (\r) node[below]{$\frac{1}{3}$};
	\draw[BrickRed] (.5,.1) node{\small $p_{1,1} = \frac{2}{3}$};
	\draw[fill,NavyBlue] (1,0) circle (\r) node[below]{1};
	\end{scope}
	%%
	%%%%%%%%%%%%%%%%%%%%%%ù
	%%
	\begin{scope}[shift={(0,-.7)}]
	\draw (0,0) -- (1,0);
	\draw[fill,NavyBlue] (0,0) circle (\r) node[below]{0};
	\draw[fill,NavyBlue] (1/3,0) circle (\r) node[below]{$\frac{1}{3}$};
	\draw[fill,NavyBlue] (1,0) circle (\r) node[below]{1};
	\draw[NavyBlue] (.5,.12) node{$\calP_1$};
	\draw[->,>=latex] (1.3,0) -- (1.7,0);
	\draw (1.5,0) node[above]{\small split};
	\draw[->,>=latex] (1.7,-.2) -- (1.3,-.5);
	\draw (1.5,-.35) node[above,rotate=37]{\small merge};
	\end{scope}
	\begin{scope}[shift={(2,-.7)}]
	\draw (0,0) -- (1,0);
	\draw[fill,NavyBlue] (0,0) circle (\r) node[below]{0};
	\draw[fill,BrickRed] (1/6,0) circle (\r) node[below]{$\frac{1}{6}$};
	\draw[BrickRed] (1/6,.1) node{\small $p_{2,1} = \frac{1}{2}$};
	\draw[fill,NavyBlue] (1/3,0) circle (\r) node[below]{$\frac{1}{3}$};
	\draw[fill,BrickRed] (5/9,0) circle (\r) node[below]{$\frac{5}{9}$};
	\draw[BrickRed] (2/3,.1) node{\small $p_{2,2} = \frac{2}{3}$};
	\draw[fill,NavyBlue] (1,0) circle (\r) node[below]{1};
	\end{scope}
	%%
	%%%%%%%%%%%%%%%%%%%%%%
	%%
	\begin{scope}[shift={(0,-1.4)}]
	\draw (0,0) -- (1,0);
	\draw[fill,NavyBlue] (0,0) circle (\r) node[below]{0};
	\draw[fill,NavyBlue] (1/6,0) circle (\r) node[below]{$\frac{1}{6}$};
	\draw[fill,NavyBlue] (5/9,0) circle (\r) node[below]{$\frac{5}{9}$};
	\draw[fill,NavyBlue] (1,0) circle (\r) node[below]{1};
	\draw[NavyBlue] (.5,.12) node{$\calP_2$};
	\draw[->,>=latex] (1.3,0) -- (1.7,0);
	\draw (1.5,0) node[above]{\small split};
	\end{scope}
	\begin{scope}[shift={(2,-1.4)}]
	\draw (.5,0) node{$\dots$};
	\end{scope}
	\end{tikzpicture}
	\caption{Illustration of the iterative definition of $(\calP_n)_{n\ge0}$.}
	\label{fig:def}
\end{figure}
By definition, the break points form an \emph{interlacing sequence}, that is they satisfy the following property, for any $n \geq 1$,
\begin{equation} \label{eq:interlacing}
a_{n,1} \leq a_{n-1,1} \leq a_{n,2} \leq a_{n-1,2} \leq \dots \leq a_{n,n-1} \leq a_{n-1,n-1} \leq a_{n,n}.
\end{equation}
The cases where $p_{n,k}=0$ or $p_{n,k}=1$ for some $n$ and $k$ are allowed: 
in such cases some intervals of the partition will be empty, but we continue to list them `with multiplicity'.
We use the same formula~\eqref{eq:induc-rand} for the dynamics of the break points in all cases.

\begin{figure}[t]
	\centering
	\begin{tikzpicture}[scale=8]
	\def\r{.01}; % definition of the radius of the dots
	\draw (0,0) -- (1,0) ++ (.15,0) node{$\calP_0$};
	\draw[fill,NavyBlue] (0,0) circle (\r) node[above]{$a_{0,0}=0$};
	\draw[fill,NavyBlue] (1,0) circle (\r) node[above]{$a_{0,1}=1$};
	\begin{scope}[shift={(0,-.1)}]
	\draw (0,0) -- (1,0) ++ (.15,0) node{$\calP_1$};
	\draw[fill,NavyBlue] (0,0) circle (\r);
	\draw[fill,NavyBlue] (.4,0) circle (\r);
	\draw[fill,NavyBlue] (1,0) circle (\r);
	\end{scope}
	\begin{scope}[shift={(0,-.2)}]
	\draw (0,0) -- (1,0) ++ (.15,0) node{$\calP_2$};
	\draw[fill,NavyBlue] (0,0) circle (\r);
	\draw[fill,NavyBlue] (.25,0) circle (\r);
	\draw[fill,NavyBlue] (.8,0) circle (\r);
	\draw[fill,NavyBlue] (1,0) circle (\r);
	\end{scope}
	\begin{scope}[shift={(0,-.3)}]
	\draw (0,0) -- (1,0) ++ (.15,0) node{$\calP_3$};
	\draw[fill,NavyBlue] (0,0) circle (\r);
	\draw[fill,NavyBlue] (.15,0) circle (\r);
	\draw[fill,NavyBlue] (.55,0) circle (\r);
	\draw[fill,NavyBlue] (.88,0) circle (\r);
	\draw[fill,NavyBlue] (1,0) circle (\r);
	\end{scope}
	\begin{scope}[shift={(0,-.4)}]
	\draw (0,0) -- (1,0) ++ (.15,0) node{$\calP_4$};
	\draw[fill,NavyBlue] (0,0) circle (\r);
	\draw[fill,NavyBlue] (.11,0) circle (\r);
	\draw[fill,NavyBlue] (.33,0) circle (\r);
	\draw[fill,NavyBlue] (.61,0) circle (\r);
	\draw[fill,NavyBlue] (.94,0) circle (\r);
	\draw[fill,NavyBlue] (1,0) circle (\r);
	\end{scope}
	\begin{scope}[shift={(0,-.5)}]
	\draw (0,0) -- (1,0) ++ (.15,0) node{$\calP_5$};
	\draw[fill,NavyBlue] (0,0) circle (\r) ++ (-.035,-.015) node[below]{$a_{5,0}$};
	\draw[fill,NavyBlue] (.06,0) circle (\r) ++ (0,-.015) node[below]{$a_{5,1}$};
	\draw[fill,NavyBlue] (.15,0) circle (\r) ++ (+.01,-.015) node[below]{$a_{5,2}$};
	\draw[fill,NavyBlue] (.42,0) circle (\r) ++ (0,-.015) node[below]{$a_{5,3}$};
	\draw[fill,NavyBlue] (.81,0) circle (\r) ++ (0,-.015) node[below]{$a_{5,4}$};
	\draw[fill,NavyBlue] (.96,0) circle (\r) ++ (-.01,-.015) node[below]{$a_{5,5}$};
	\draw[fill,NavyBlue] (1,0) circle (\r) ++ (+.045,-.015) node[below]{$a_{5,6}$};
	\end{scope}
	\end{tikzpicture}
	\caption{Example of the first steps of the split-merge dynamics, highlighting the interlacing property satisfied by the points $a_{n,k}$. Note that the points quickly accumulate at 0 and at 1, suggesting that the weak limit of the empirical distribution is supported by the endpoints.}
	\label{fig:interlacing}
\end{figure}

We sometimes choose the family of splitting proportions randomly.
In this case, we will use upper-case letters for both the splitting proportions $\bP=(P_{n,k}:n\ge1,\,1\le k\le n)$
and the break points $A_{n,k}$, but their relation will be the same, namely $A_{n,0}=A_{n,n+1}=0$ for all $n$ and
\begin{equation}
\label{eq:induc-rand2}
%A_{n,k}=(1-P_{n,k})A_{n-1,k-1}+P_{n,k}A_{n-1,k},\q k=1,\ldots,n.
A_{n,k}=P_{n,k}A_{n-1,k-1}+(1-P_{n,k})A_{n-1,k},\q k=1,\ldots,n.
\end{equation}

In this paper, we mainly consider the following three choices for the splitting proportions.
\begin{itemize}
	\item \textbf{Deterministic stratified case:} we choose a
	deterministic sequence $(p_n)_{n\geq 1}$ in $[0,1]$ and set $p_{n,k}=p_n$ for all $k$. 
	An interesting special case is the choice $p_n\equiv p\in(0,1)$. 
	\item \textbf{Random stratified case:} we set $P_{n,k}=P_n$ for all $k$, 
	where $(P_n)_{n\ge1}$ are i.i.d.\@ random variables in $[0,1]$. 
	\item \textbf{Fully random case:} we take $(P_{n,k}:n\ge1,\,1\le k\le n)$ to be an 
	array of i.i.d.\@ random variables in $[0,1]$.   
\end{itemize}
In each of these three cases, 
the sequence $(\calP_n)_{n\ge0}$ forms a Markov chain whose state-space is the set of finite partitions 
of the unit interval into subintervals.

\subsection{Motivation and related literature}

The motivation for our work comes from an analysis of the asymptotic behavior, in a certain regime, 
of a Markov chain on $ \{-1,1\}^K $ with $K$ large, considered in~\cite{Bressaud24}. 
This latter Markov chain moves from a state $a\in\{-1,1\}^K $ to a state $ b\in\{-1,1\}^K$ according to the following rule: 
choose, uniformly at random, a subsequence $J$ of $\{1,\ldots,K\}$ 
from the set of subsequences $j=(j(1),\dots,j(\ell))$ of $\{1,\dots,K\}$ such that $a_{j(i+1)}\neq a_{j(i)}$ for all $i$.
Once a subsequence $J$ is sampled, 
the Markov chain moves to the state $b\in\{-1,1\}^K$ defined by $b_k=a_k $ if $k$ is not in the range of $J$, 
and $b_k=-a_k$ otherwise. %SC
Starting from  the initial state $a=(1,\dots,1)$, 
the first transition leads to a state $b$ that has at most one digit equal to $-1$, 
and this $b$ is chosen uniformly among the $K+1$ possible choices; 
furthermore, for $n\ll\log K$ the behavior of this Markov chain can be approximated by 
the fully random case of the model we consider in the present paper, 
where the proportions have the standard uniform distribution. 
For more related results concerning this Markov chain refer to~\cite{Boissard15}.

The splitting part of our dynamics appears in fragmentation processes, a well-studied class of stochastic processes (see e.g.~\cite{Bertoin01,Bertoin06,Berestycki10}),
and also bears some resemblance to the definition of multiplicative cascades (see e.g.~\cite{Kahane76,Barral2010,Heurteaux2016}).
However, the merging feature prominent in our model does not seem to have been studied yet and produces a radically different behavior. In particular, the number of intervals grows linearly in the number of steps for our model, whereas it grows exponentially for fragmentation processes or multiplicative cascades.

We have already mentioned that our dynamics produces interlacing sequences. 
We would like to point out that interlacing sequences appear naturally in other contexts as well, both deterministically and randomly. 
A common name for a family of sequences satisfying \eqref{eq:interlacing} is a \textit{Gelfand--Tsetlin pattern}, which is
in many situations a finite pattern (i.e.\@ extends over a finite range of $n$), but sometimes is infinite, like our pattern is. 
Important examples of Gelfand--Tsetlin patterns are the eigenvalues of the successive principal minors of an $N\times N$ Hermitian matrix (\cite{Hwang2004,Baryshnikov2001}) or the roots of the successive derivatives of a degree $N$ polynomial with real roots (\cite{Fisk2006}). 
These models differ from our model both in the manner interlacing structures are constructed and in the type of questions that are being asked.

\subsection{Content of the paper}

We are interested in the behavior of the empirical distribution of the break points of the partition $\calP_n$ as $n\to\infty$. 
We will set
\begin{equation} 
\label{e:emp.dist}
g_n=\frac1n\sum_{k=1}^n\delta_{a_{n,k}},\q
G_n=\frac1n\sum_{k=1}^n\delta_{A_{n,k}}
\end{equation} 
in the deterministic and random cases respectively.
Here $\delta_a$ denotes the unit mass at point $a$.  
Note that, when the splitting proportions $\bP$ are random, the empirical distribution $G_n$ is a random probability measure. 
We will study both the overall behavior of the empirical distribution and its restriction to certain parts of the unit interval. 

We begin in Section~\ref{sec:repr} by establishing a key representation of the break points in terms of the cumulative
distribution function of a certain random walk, whose steps are governed by the splitting proportions. 
We denote this walk by $(x_n)_{n\ge0}$ when we restrict to the case of deterministic splitting proportions, 
and by $(X_n)_{n\ge0}$ in the general random case.
The walk $(x_n)_{n\ge0}$ will be random in both cases, even when the splitting proportions are deterministic.
Interestingly, this representation is useful even in the deterministic case. 
In the random stratified case, $(X_n)_{n\ge0}$ becomes a random walk with a random environment in time, 
where the environment consists of the random splitting proportions $(P_n)_{n\ge1}$. 
Similarly, in the fully random case, 
$(X_n)_{n\ge0}$ becomes a random walk with a random environment in time and space. 

In Section~\ref{sec:weak-cv}, weak convergence of the empirical distribution is established, 
as a consequence of the law of large numbers satisfied by the random walk $(X_n)_{n\ge0}$.
The weak limit turns out always to be supported by the endpoints of the unit interval. 
Nonetheless, the number of points of $\calP_n$ in any given open subset of $(0,1)$ diverges,
and we proceed in Section~\ref{sec:away_endpoints} to investigate the behavior of the empirical distribution 
away from the endpoints.
This study relies on the central limit theorem satisfied by $(X_n)_{n\ge0}$ 
and we will show vague convergence, under suitable renormalization, to an infinite measure on $(0,1)$, 
whose density is the derivative of the quantile function of a standard normal distribution. 
The exact scaling is however sensitive to the case considered.
Finally, in Section~\ref{sec:near_endpoints}, we investigate more precisely the behavior of the
empirical distribution near the endpoints, 
by studying for example the number of break points of $\calP_n$ in intervals of the type $[0,x^n]$ for $0<x<1$. 
This behavior turns out to be related to the large deviation principle for the random walk $(X_n)_{n\ge0}$.

\section{Representation of the break points}
\label{sec:repr}
Set 
$$
I=\{(n,k):n\ge1,\,1\le k\le n\}
$$
and fix 
$$
\bp=(p_{n,k}:(n,k)\in I)
$$
with $p_{n,k}\in[0,1]$ for all $n$ and $k$.
Let $(U_{n,k}:(n,k)\in I)$ be a family of independent random variables, all uniformly distributed on $[0,1]$.
Set 
$$
y_{n,k}=1_{\{U_{n,k}\le p_{n,k}\}}
$$
so that $y_{n,k}$ is a Bernoulli random variable with success probability $p_{n,k}$.
Write $\O_0$ for the set of sequences $(y_{n,k}:(n,k)\in I)$ with $y_{n,k}\in\{0,1\}$ for all $n$ and $k$.
Denote by $\PP^\bp$ the law of $(y_{n,k}:(n,k)\in I)$ on $\O_0$.
Define a random process $(x_n)_{n\ge0}$ on $\O_0$ by setting $x_0=0$ and then, recursively for $n\ge1$, 
\begin{equation}
\label{XN}
x_n=x_{n-1}+y_{n,x_{n-1}+1}.
\end{equation}
Note that $x_n\in\{0,1,\dots,n\}$ for all $n$ and $(x_n)_{n\ge0}$ is a time-inhomogeneous Markov chain under $\PP^\bp$.
The following proposition provides a useful formula for the break points of the deterministic split-merge process 
with splitting proportions $\bp$.

\begin{proposition} 
	\label{prop:exact_b}
	For all $(n,k)\in I$, we have
	$$
	a_{n,k}=\PP^\bp(x_n \le k-1).
	$$
\end{proposition}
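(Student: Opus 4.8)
The plan is to prove the identity by induction on $n$, reading the recursion~\eqref{eq:induc-rand} as the forward equation for the distribution function of the Markov chain $(x_n)_{n\ge0}$. It is convenient first to note that the boundary data already present in the setup extend the claimed formula automatically: for every $n$ we have $a_{n,0}=0$ and $a_{n,n+1}=1$, while $\PP^\bp(x_n\le-1)=0$ (empty event) and $\PP^\bp(x_n\le n)=1$ (since $x_n\in\{0,\dots,n\}$ always). So $a_{n,k}=\PP^\bp(x_n\le k-1)$ holds trivially at $k=0$ and $k=n+1$ for all $n$, and it suffices to treat $1\le k\le n$. For the base case $n=1$ we have $x_1=y_{1,1}$, hence $\PP^\bp(x_1\le0)=\PP^\bp(y_{1,1}=0)=1-p_{1,1}=p_{1,1}a_{0,0}+(1-p_{1,1})a_{0,1}=a_{1,1}$.

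For the inductive step, assume $a_{n-1,j}=\PP^\bp(x_{n-1}\le j-1)$ for all $0\le j\le n$, fix $1\le k\le n$, and condition on $x_{n-1}$. Given $x_{n-1}=j$, the chain moves to $x_n=j+y_{n,j+1}$, equal to $j$ with probability $1-p_{n,j+1}$ and to $j+1$ with probability $p_{n,j+1}$. Thus the conditional probability of $\{x_n\le k-1\}$ is $1$ when $j\le k-2$, equals $1-p_{n,k}$ when $j=k-1$, and is $0$ when $j\ge k$. Summing over $j$,
$$
\PP^\bp(x_n\le k-1)=\PP^\bp(x_{n-1}\le k-2)+(1-p_{n,k})\,\PP^\bp(x_{n-1}=k-1).
$$
Substituting $\PP^\bp(x_{n-1}=k-1)=\PP^\bp(x_{n-1}\le k-1)-\PP^\bp(x_{n-1}\le k-2)$ and rearranging, the right-hand side becomes
$$
p_{n,k}\,\PP^\bp(x_{n-1}\le k-2)+(1-p_{n,k})\,\PP^\bp(x_{n-1}\le k-1),
$$
which by the inductive hypothesis applied at $j=k-1$ and $j=k$ equals $p_{n,k}a_{n-1,k-1}+(1-p_{n,k})a_{n-1,k}=a_{n,k}$ by~\eqref{eq:induc-rand}. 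This closes the induction.

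The argument is elementary and I do not expect any genuine obstacle: the only point needing a little care is the bookkeeping at the ends $k=1$ (where the index $k-2=-1$ occurs) and $k=n$, which is handled cleanly by the boundary conventions for $a_{n,0},a_{n,n+1}$ and for the events $\{x_n\le-1\},\{x_n\le n\}$ recorded above. The real content is the structural observation that~\eqref{eq:induc-rand} is precisely the recursion satisfied by the cumulative distribution function of $(x_n)$; once that is recognized, the computation is forced.
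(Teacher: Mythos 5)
Your proof is correct and follows essentially the same approach as the paper: both verify that $\PP^\bp(x_n\le k-1)$ satisfies the boundary conditions $a_{n,0}=0$, $a_{n,n+1}=1$ and the same recursion~\eqref{eq:induc-rand}, via the one-step conditioning on $x_{n-1}$ and the independence of $y_{n,k}$ from $x_{n-1}$. The paper phrases this slightly more compactly (setting $\alpha_{n,k}=\PP^\bp(x_n\le k-1)$ and checking the recursion directly without an explicit induction scaffold), but the computation is identical.
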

\begin{proof}
	Define for $n\ge0$ and $0\le k\le n+1$
	$$
	\a_{n,k}=\PP^\bp(x_n \le k-1).
	$$
	Then $\a_{n,0}=0$ and $\a_{n,n+1}=1$ for all $n$.
	For $(n,k)\in I$, we have
	\begin{align*}
	\a_{n,k}
	&=\PP^\bp(x_n\le k-1)\\
	&=\PP^\bp(x_{n-1}\le k-2)+\PP^\bp(x_{n-1}=k-1,y_{n,k}=0)\\
	%&=(1-p_{n,k})\a_{n-1,k-1}+p_{n,k}\a_{n-1,k}.
	&=p_{n,k}\a_{n-1,k-1}+(1-p_{n,k})\a_{n-1,k}.
	\end{align*}
	The same recursion~\eqref{eq:induc-rand} defines $a_{n,k}$ so $\a_{n,k}=a_{n,k}$ for all $n$ and $k$.
\end{proof}

We turn now to the more general case where the splitting proportions $\bP$ are random.
By augmenting our probability space $(\O,\calF,\PP)$ if necessary, 
we can assume that $\O$ supports a family of independent random variables $(U_{n,k}:(n,k)\in I)$ as above,
which is moreover independent of $\bP$.
Set 
$$
%Y_{n,k}=1_{\{U_{n,k}\le 1-P_{n,k}\}}.
Y_{n,k}=1_{\{U_{n,k}\le P_{n,k}\}}
$$
and define a random process $X=(X_n)_{n\ge0}$ on $\O$ by setting $X_0=0$ and then, recursively for $n\ge1$,
$$
X_n=X_{n-1}+Y_{n,X_{n-1}+1}.
$$
Note that if $\Phi$ is a bounded measurable function on $(\ZZ^+)^{\ZZ^+}$ and $\phi:[0,1]^I\to\RR$ is defined by
$$
\phi(\bp)=\EE^\bp(\Phi(x)),
$$
where $x=(x_n)_{n\ge0}$ is the time-inhomogeneous Markov chain defined at \eqref{XN},
then $\phi$ is measurable and we have, $\PP$-almost surely,
\begin{equation} \label{eq:phiPhi}
\EE(\Phi(X)|\bP)=\phi(\bP).
\end{equation}

It will be convenient sometimes to regard \eqref{eq:induc-rand} and \eqref{e:emp.dist}
as defining measurable functions $a_{n,k}(\bp)$ and $g_n(\bp)$ of $\bp$, so we can write
$$
A_{n,k}=a_{n,k}(\bP),\q G_n=g_n(\bP).
$$
Note that, on taking $\Phi(x)=1_{\{x_n\le k-1\}}$, we have $\phi(\bp)=a_{n,k}(\bp)$ by Proposition \ref{prop:exact_b}.
Hence, the random break points $(A_{n,k}:(n,k)\in I)$ associated with the random splitting proportions $\bP$ satisfy, 
almost surely,
$$
A_{n,k}=a_{n,k}(\bP)=\PP(X_n\le k-1|\bP)
$$
We remark on two special cases where $(X_n)_{n\ge0}$ takes a simple form.
\begin{itemize}
	\item In the case where $p_{n,k}\equiv p\in[0,1]$ for all $n$ and $k$, 
	which is a special case of the deterministic stratified case, $(X_n)_{n\ge0}$ is a simple (biased) random walk. 
	In particular $X_n$ has the binomial distribution with parameters $n$ and $p$.
	\item In the fully random case, the process $(X_n)_{n\ge0}$ is a Random Walk in Dynamic Random Environment (RWDRE). 
	See for example~\cite{Rassoul17}.
	We will be interested not only in its annealed distribution, which is a simple random walk, 
	but also in its quenched distribution, conditional on the splitting proportions $\bP$.
\end{itemize}

\section{Weak convergence of the empirical distributions}
\label{sec:weak-cv}
Our first use of the representation of the partitions $\calP_n$ developed in the previous section shows that, 
perhaps somewhat unexpectedly, 
under quite general conditions, 
the sequence of empirical distributions of break points $(g_n)_{n\ge0}$ (or $(G_n)_{n\ge0}$ in the random case) 
has a weak limit supported on $\{0,1\}$.
This is the content of the main result of this section, Theorem \ref{thm:cv_p} below.

We start with a key lemma stating that this weak convergence is a consequence of 
a law of large numbers for the auxiliary walk $(x_n)_{n\ge0}$.

\begin{lemma}
	\label{lem:LLN}
	Fix a family of splitting proportions $\bp=(p_{n,k}:(n,k)\in I)$ 
	and consider the associated random process $(x_n)_{n\ge0}$ defined on $\O_0$ by \eqref{XN}. 
	Assume that, for some $\bar p\in[0,1]$, under $\PP^\bp$, as $n\to\infty$,
	\begin{equation} 
	\label{e:condA}
	x_n/n\to\bar p\quad\text{in probability}.
	\end{equation} 
	Then, as $n\to\infty$, 
	$$
	g_n\Rightarrow\bar p\delta_0+(1-\bar p)\delta_1\q\text{weakly on $[0,1]$}.
	$$
\end{lemma}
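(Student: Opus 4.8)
The plan is to exploit the exact formula $a_{n,k}=\PP^\bp(x_n\le k-1)$ from Proposition~\ref{prop:exact_b}, which in particular makes $k\mapsto a_{n,k}$ non-decreasing. Since weak convergence of probability measures on $[0,1]$ is equivalent to pointwise convergence of the cumulative distribution functions at every continuity point of the limiting one, and the distribution function of $\mu:=\bar p\,\delta_0+(1-\bar p)\,\delta_1$ is continuous off $\{0,1\}$ and equals $\bar p$ on $(0,1)$, it is enough to prove that for each fixed $t\in(0,1)$,
$$
g_n\big([0,t]\big)=\frac1n\,\#\{k\in\{1,\dots,n\}:a_{n,k}\le t\}\longrightarrow\bar p .
$$

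To handle the left-hand side I would first use monotonicity. Write $F_n(j)=\PP^\bp(x_n\le j)$ for $j\in\{0,\dots,n\}$; this is non-decreasing in $j$, so $\{k\in\{1,\dots,n\}:a_{n,k}\le t\}=\{k:F_n(k-1)\le t\}$ is an initial segment $\{1,\dots,k_n\}$ of $\{1,\dots,n\}$ (possibly empty), where $k_n=k_n(t)=\#\{j\in\{0,\dots,n-1\}:F_n(j)\le t\}$. Thus $g_n([0,t])=k_n/n$, and the statement reduces to $k_n/n\to\bar p$.

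The heart of the matter is then a quantile estimate coming from the law of large numbers~\eqref{e:condA}. Fix $\delta>0$. If $\bar p-\delta>0$, then for large $n$ we have $F_n\big(\lfloor(\bar p-\delta)n\rfloor\big)\le\PP^\bp(x_n/n\le\bar p-\delta)\to 0<t$, so for such $n$ the index $j=\lfloor(\bar p-\delta)n\rfloor$ (which lies in $\{0,\dots,n-1\}$) satisfies $F_n(j)\le t$, whence, by monotonicity, $k_n\ge\lfloor(\bar p-\delta)n\rfloor+1\ge(\bar p-\delta)n$; if $\bar p-\delta\le0$ this lower bound is trivial. Symmetrically, if $\bar p+\delta<1$, then for large $n$, $F_n\big(\lceil(\bar p+\delta)n\rceil\big)\ge\PP^\bp(x_n/n\le\bar p+\delta)\to 1>t$, so $F_n(j)>t$ for every $j\ge\lceil(\bar p+\delta)n\rceil$, whence $k_n\le\lceil(\bar p+\delta)n\rceil\le(\bar p+\delta)n+1$; if $\bar p+\delta\ge1$ this upper bound is trivial since $k_n\le n$. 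Combining, $\limsup_{n\to\infty}|k_n/n-\bar p|\le\delta$, and letting $\delta\downarrow0$ yields $k_n/n\to\bar p$, which is what we wanted.

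I do not expect a genuine obstacle: the whole point is that, through Proposition~\ref{prop:exact_b}, the numbers $(a_{n,k})_{1\le k\le n}$ form a discretization of the distribution function of $x_n/n$, which under~\eqref{e:condA} concentrates near the value $\bar p n$; the empirical measure of a sequence equal to $0$ on a proportion $\bar p$ of the grid and $1$ on the remaining proportion $1-\bar p$ converges to $\bar p\,\delta_0+(1-\bar p)\,\delta_1$. The only care required is the bookkeeping with floors and ceilings and the degenerate cases $\bar p\in\{0,1\}$ (or $t$ near the endpoints), all absorbed by the $\limsup$ argument above. As an alternative one could test directly against $f\in C([0,1])$: writing $\frac1n\sum_{k=1}^n f(a_{n,k})=\frac1n\sum_{j=0}^{n-1}f(F_n(j))$ and splitting the sum according to whether $F_n(j)<\epsilon$, $F_n(j)>1-\epsilon$, or $\epsilon\le F_n(j)\le1-\epsilon$, the same estimate shows the middle range contains $o(n)$ indices and uniform continuity of $f$ closes the argument.
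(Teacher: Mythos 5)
Your proof is correct and is essentially the same argument as in the paper: both reduce to showing $g_n([0,t])\to\bar p$ for $t\in(0,1)$ via Proposition~\ref{prop:exact_b}, exploiting that $k\mapsto a_{n,k}=\PP^\bp(x_n\le k-1)$ is non-decreasing and that the LLN~\eqref{e:condA} forces $a_{n,\lfloor\alpha n\rfloor}\to 0$ for $\alpha<\bar p$ and $\to 1$ for $\alpha>\bar p$. The paper's proof is terser (leaving the monotonicity/sandwiching step implicit), while yours spells out the bookkeeping with $k_n$, floors, and ceilings; the content is identical.
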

\begin{proof}
	It follows from the assumption that, for any fixed $\a\in(0,1)$, by Proposition \ref{prop:exact_b}, 
	\[
	a_{n,\lfloor\a n\rfloor}=\PP^\bp(x_n\le\lfloor\a n\rfloor-1)\to
	\begin{cases}
	0&\text{if }\a<\bar p,\\
	1&\text{if }\a>\bar p,\\
	\end{cases}
	\] 
	and so $g_n([0,x])\to\bar p$ for all $x\in(0,1)$, which implies the claimed weak convergence.
\end{proof}

\begin{theorem}	\label{thm:cv_p}
	\textup{(a)} {\bf Deterministic stratified case}. 
	Assume that the averages $\frac1n\sum_{k=1}^np_k$ converge to some limit $\bar p$ as $n\to\infty$.
	Then, as $n\to\infty$, 
	$$
	g_n\Rightarrow\bar p\delta_0+(1-\bar p)\delta_1\q\text{weakly on $[0,1]$}.
	$$
	\textup{(b)} {\bf Random stratified and fully random cases}. 
	Define $\bar p=\EE(P_1)$ or $\bar p=\EE(P_{1,1})$ according to the case in hand.
	Then, almost surely, as $n\to\infty$, 
	$$
	G_n\Rightarrow\bar p\delta_0+(1-\bar p)\delta_1\q\text{weakly on $[0,1]$}.
	$$
\end{theorem}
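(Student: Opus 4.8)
The plan is to deduce both parts from Lemma~\ref{lem:LLN}: the entire task then reduces to verifying its hypothesis~\eqref{e:condA} — a law of large numbers $x_n/n\to\bar p$ for the auxiliary walk — in each of the three cases.

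\textbf{Part (a).} When $p_{n,k}=p_n$ does not depend on $k$, the successive increments $\xi_m:=y_{m,x_{m-1}+1}$ of the walk~\eqref{XN} are, under $\PP^\bp$, independent with $\xi_m$ Bernoulli of parameter $p_m$: given $x_{m-1}$ (which is a function of the $y_{j,\cdot}$ with $j\le m-1$ only), the variable $y_{m,x_{m-1}+1}$ has the Bernoulli$(p_m)$ law whatever the value of $x_{m-1}$. Hence $x_n=\sum_{m=1}^n\xi_m$ has mean $\sum_{m=1}^n p_m$ and variance $\sum_{m=1}^n p_m(1-p_m)\le n/4$ under $\PP^\bp$, so the variance of $x_n/n$ tends to $0$; combined with the hypothesis $\frac1n\sum_{m=1}^n p_m\to\bar p$, Chebyshev's inequality yields $x_n/n\to\bar p$ in $\PP^\bp$-probability, which is~\eqref{e:condA}. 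Lemma~\ref{lem:LLN} then completes part~(a).

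\textbf{Part (b).} The point to watch is that Lemma~\ref{lem:LLN} is a statement about a fixed deterministic family, whereas $G_n=g_n(\bPn)$; so I must show that, for almost every $\omega$, the deterministic family $\bpn=\bPn(\omega)$ satisfies~\eqref{e:condA} — that is, I need a \emph{quenched} law of large numbers for $X$. I would first record that in both the random stratified and the fully random case the \emph{annealed} walk is a simple random walk, $X_n$ having the Binomial$(n,\bar p)$ law: since the $U_{n,k}$ are independent of one another and of $\bPn$, for each fixed $j$ one has $\EE\bigl(1_{\{U_{n,j+1}\le P_{n,j+1}\}}\bigr)=\EE(P_{n,j+1})=\bar p$, and conditioning on $X_{n-1}$ shows that the increment $Y_{n,X_{n-1}+1}$ is Bernoulli$(\bar p)$ and independent of the past. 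In particular Hoeffding's inequality gives $\PP(|X_n/n-\bar p|>\varepsilon)\le 2e^{-2\varepsilon^2 n}$ for every $\varepsilon>0$.

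Fixing $\varepsilon>0$ and setting $q_n:=\PP(|X_n/n-\bar p|>\varepsilon\mid\bPn)$, Proposition~\ref{phiPhi} applied to $\Phi(x)=1_{\{|x_n/n-\bar p|>\varepsilon\}}$ gives $q_n=\PP^{\bPn}(|x_n/n-\bar p|>\varepsilon)$ almost surely, while $\sum_n\EE(q_n)=\sum_n\PP(|X_n/n-\bar p|>\varepsilon)<\infty$ by the bound above; so by Tonelli's theorem $\sum_n q_n<\infty$, hence $q_n\to0$, almost surely. Intersecting these events over a countable dense set of values of $\varepsilon$ produces a full-probability event on which $\bpn=\bPn(\omega)$ satisfies~\eqref{e:condA}, and there Lemma~\ref{lem:LLN} gives $G_n=g_n(\bPn)\Rightarrow\bar p\delta_0+(1-\bar p)\delta_1$, as required. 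The one genuine obstacle is this passage from the annealed to the quenched law of large numbers: one might fear that the fully random (RWDRE) case calls for the heavier machinery of random walks in random environments, but it does not, precisely because the annealed walk is exactly Binomial, so the crude exponential concentration above together with a Borel--Cantelli argument handles both models at once. (For the random stratified model alone one could instead invoke the strong law $\frac1n\sum_{m=1}^nP_m\to\bar p$ directly and, on that event, rerun the part~(a) Chebyshev estimate conditionally on $\bPn$, whose increments are then independent Bernoulli$(P_m)$.)
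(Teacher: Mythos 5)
Your proposal is correct and follows the same skeleton as the paper: reduce both parts to Lemma~\ref{lem:LLN}, identify the walk $(x_n)$ as a sum of independent Bernoulli$(p_m)$ increments in part~(a), and in part~(b) recognize that the annealed walk $(X_n)$ is a simple random walk and transfer to a quenched law of large numbers via Proposition~\ref{phiPhi}. The only substantive difference is the transfer mechanism in part~(b): the paper applies Proposition~\ref{phiPhi} to the single indicator $\Phi(x)=1_{\{x_n/n\to\bar p\}}$, uses the annealed strong law to get $\phi(\bPn)=\PP(X_n/n\to\bar p\mid\bPn)=1$ a.s.\ (a quenched \emph{almost sure} LLN), and then merely downgrades to convergence in probability before invoking the lemma; you instead apply Proposition~\ref{phiPhi} level by level to $1_{\{|x_n/n-\bar p|>\varepsilon\}}$, feed in the annealed Hoeffding bound, and pass to the quenched statement via Tonelli and summability. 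Both are standard and valid annealed-to-quenched arguments; yours is slightly more quantitative (it delivers a rate), while the paper's is shorter, exploiting in one stroke that the conditional probability of an almost sure event is almost surely $1$. Your parenthetical route for the random stratified case (strong law for $\frac1n\sum_{m\le n}P_m$, then the part~(a) Chebyshev estimate conditionally on $\bPn$) is also correct, and in fact part~(a) with Hoeffding in place of Chebyshev is exactly what the paper does.
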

\begin{proof}
	For part (a), under $\PP^\bp$, the process $(x_n)_{n\ge0}$ is a sum of independent Bernoulli random variables, 
	with success probabilities $p_1,p_2,\dots$.
	Then, by Hoeffding's inequality, for all $\varepsilon>0$, 
	\[
	\PP^\bp\left(\left|\frac{x_n}n-\frac1n\sum_{k=1}^np_k\right|\ge\varepsilon\right)
	\le2\exp\left\{-2n\varepsilon^2\right\}.
	\]
	Hence $x_n/n\to\bar p$ in probability and Lemma \ref{lem:LLN} applies. 
	
	For part (b), note that, in both cases, 
	the process $(X_n)_{n\ge0}$ is a random walk with mean step size $\bar p$,
	so $X_n/n\to\bar p$ almost surely by the strong law of large numbers.
	Set
	$$
	\Phi(x)=1_{\{x_n/n\to\bar p\text{ as }n\to\infty\}}
	$$
	then
	$$
	\phi(\bp)=\EE^\bp(\Phi(x))=\PP^\bp(x_n/n\to\bar p\text{ as }n\to\infty).
	$$
	By \eqref{eq:phiPhi}, almost surely,
	$$
	\phi(\bP)=\PP(X_n/n\to\bar p\text{ as }n\to\infty|\bP).
	$$
	Hence $\phi(\bP)=1$ almost surely.
	Consider the event $\O^*=\{\bP\in B\}$, where
	$$
	B=\{\bp:x_n/n\to\bar p\text{ in probability as $n\to\infty$ under $\PP^\bp$}\}.
	$$
	By Lemma \ref{lem:LLN}, for $\bp\in B$, we have $g_n(\bp)\Rightarrow\bar p\d_0+(1-\bar p)\d_1$ weakly as $n\to\infty$.
	Then, since convergence almost surely implies convergence in probability, 
	$$
	\PP(\O^*)\ge\PP(\phi(\bP)=1)=1
	$$
	and on $\O^*$ we have 
	$$
	G_n=g_n(\bP)\Rightarrow \bar p\d_0+(1-\bar p)\d_1\q,
	$$
	weakly as $n\to\infty$.
\end{proof}

\section{Taking a closer look}
\label{sec:renorm}
The main result of the previous section, Theorem \ref{thm:cv_p}, shows that, under general conditions, 
the empirical probability distribution of break points $G_n$, defined at \eqref{e:emp.dist}, converges weakly on $[0,1]$ 
(on an event of probability 1 in the random cases) to a limit supported by the endpoints of the interval. 
That is, most of the break points $A_{n,k}$ `move' either to 0 or to 1 as the split-merge dynamics proceeds. 
Still, it is clear that this result does not provide a completely satisfactory picture of the distribution of the break points, 
even for large $n$. 
For example, as $n\to\infty$, there will be more and more break points away from the endpoints of the interval. 
How are these points distributed? 
Furthermore, even though most of the mass of $G_n$ moves towards the endpoints, how fast does this movement happen?  
We address these questions in this section. 

\subsection{Away from the endpoints}
\label{sec:away_endpoints}
The results in this subsection will be expressed in terms of the quantile function 
$$
Q=\Phi^{-1}:(0,1)\to\RR
$$ 
associated with the standard normal cumulative distribution function $\Phi$.
%We denote by $\mu$ the Radon measure on $(0,1)$ having density $Q'$ with respect to Lebesgue measure. 
%Note that, since $Q(x)\to\pm\infty$ as $x\to 0$ or $x\to 1$, $\mu$ has infinite mass near both endpoints.

%In cases where $p_{1,n}=0$ or $p_{n,n}=1$ for some $n$, the empirical distribution $G_n$ will acquire mass at $0$ or $1$.
%For the results of this section, we remove any such mass by considering its restriction to the interior, given by
%$$
%G_n^\circ=1_{(0,1)}G_n.
%$$
%The following lemma shows, for a given deterministic family of splitting proportions, 
%that the empirical measure of break points, restricted to $(0,1)$ and suitably rescaled, converges vaguely to $\mu$ on $(0,1)$
%provided that auxiliary random walk $(x_n)_{n\ge0}$ satisfies a central limit theorem.

\begin{lemma} \label{lem:CLT}
	Fix a family of splitting proportions $\bp=(p_{n,k}:(n,k)\in I)$ 
	and write $(g_n)_{n\ge0}$ for the associated sequence of empirical distributions of the break points.
	Consider the probability measure $\PP^\bp$ on $\O_0$ defined in Section~\ref{sec:repr} and the random process $(x_n)_{n\ge0}$ on $\O_0$ defined at \eqref{XN}. 
	Assume that there exist sequences $(m_n)_{n\ge0}$ in $\RR$ and $(\sigma_n)_{n\ge0}$ in $(0,\infty)$, 
	such that, as $n\to\infty$, we have $\sigma_n\to\infty$ and, for all $t\in\RR$,
	\begin{equation} 
	\label{eq:hyp_CLT}
	\PP^\bp\left(\frac{x_n-m_n}{\sigma_n}\le t\right)\to\Phi(t).
	\end{equation}
	Then, for all $x,y\in(0,1)$ with $x\le y$, as $n\to\infty$,
	$$
	\frac{ng_n([x,y])}{\sigma_n}\to Q(y)-Q(x).
	$$
	That is, the sequence $(ng_n/\sigma_n)$ of Radon measures on $(0,1)$
	converges vaguely on that space to the Radon measure with density $Q'$ with respect to the Lebesgue measure. 
\end{lemma}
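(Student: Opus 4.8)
The plan is to convert the assumed central limit theorem \eqref{eq:hyp_CLT} for $(x_n)_{n\ge0}$ into the stated vague convergence by the same translation between quantiles of $x_n$ and break points $a_{n,k}$ that was used in Lemma \ref{lem:LLN}. First, fix $x\in(0,1)$ and let $t=Q(x)$, so that $\Phi(t)=x$. For each $n$, consider the index $k_n(x)=\lfloor m_n+\sigma_n Q(x)\rfloor$. By Proposition \ref{prop:exact_b}, $a_{n,k_n(x)}=\PP^\bp(x_n\le k_n(x)-1)=\PP^\bp\big((x_n-m_n)/\sigma_n\le (k_n(x)-1-m_n)/\sigma_n\big)$, and since $(k_n(x)-1-m_n)/\sigma_n\to Q(x)$ as $\sigma_n\to\infty$, the hypothesis \eqref{eq:hyp_CLT} together with continuity of $\Phi$ gives $a_{n,k_n(x)}\to\Phi(Q(x))=x$. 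Thus the break point with label $k_n(x)$ converges to $x$.

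Next I would count the break points in $[x,y]$. Because $k\mapsto a_{n,k}$ is nondecreasing, the number of break points $a_{n,k}$ lying in $[x,y]$ is, up to boundary effects of at most a bounded number of indices, equal to $k_n(y)-k_n(x)$; more precisely $n g_n([x,y])=\#\{k: x\le a_{n,k}\le y\}$ and one shows $k_n(y)-k_n(x)-C\le n g_n([x,y])\le k_n(y)-k_n(x)+C$ is not quite right as stated — instead the cleanest route is to use the distribution-function identity $n g_n([0,x]) = \#\{k : a_{n,k}\le x\}$ and the fact that $a_{n,k}\le x \iff \PP^\bp(x_n\le k-1)\le x$. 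Monotonicity in $k$ means $\#\{k\le n: a_{n,k}\le x\}$ is exactly the largest $k$ with $a_{n,k}\le x$ (or $0$), and one checks this is $m_n+\sigma_n Q(x)+o(\sigma_n)$ using \eqref{eq:hyp_CLT} and $\sigma_n\to\infty$: indeed for any $\epsilon>0$, eventually $a_{n,\lfloor m_n+\sigma_n(Q(x)-\epsilon)\rfloor}<x<a_{n,\lceil m_n+\sigma_n(Q(x)+\epsilon)\rceil}$. Subtracting the estimates at $x$ and at $y$ gives $n g_n([x,y]) = \sigma_n(Q(y)-Q(x)) + o(\sigma_n)$, i.e. $n g_n([x,y])/\sigma_n\to Q(y)-Q(x)$, which is the claimed convergence.

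For the final sentence, vague convergence of $(n g_n/\sigma_n)$ to the measure with density $Q'$ follows because the intervals $[x,y]\subset(0,1)$ with $Q$ continuous at the endpoints (which is all such intervals, as $Q=\Phi^{-1}$ is continuous and strictly increasing on $(0,1)$) form a convergence-determining class for vague convergence on $(0,1)$, and $Q(y)-Q(x)=\int_x^y Q'(u)\,du$ since $Q$ is $C^1$ on $(0,1)$. Concretely, a compactly supported continuous test function on $(0,1)$ can be uniformly approximated by step functions on such intervals, and the mass of $n g_n/\sigma_n$ on any compact subinterval stays bounded, so the interval convergence upgrades to weak-against-test-function convergence.

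The main obstacle is the bookkeeping in the second step: translating the pointwise limit $a_{n,k_n(x)}\to x$ into a uniform-enough statement to pin down $\#\{k: a_{n,k}\le x\}$ up to $o(\sigma_n)$. The key point making this work is that a shift of the index $k$ by any fixed amount, or indeed by $o(\sigma_n)$, changes the argument $(k-1-m_n)/\sigma_n$ of $\Phi$ by $o(1)$, so the monotonicity of $k\mapsto a_{n,k}$ combined with the sandwiching at $m_n+\sigma_n(Q(x)\pm\epsilon)$ is exactly strong enough; one does not need any rate in \eqref{eq:hyp_CLT}, only $\sigma_n\to\infty$. A minor technical care is needed at $y$ versus $x$ to ensure the same subsequence-free argument applies, but since each estimate holds along the full sequence $n\to\infty$ there is no issue.
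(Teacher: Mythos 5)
Your proof is correct, and it takes a recognizably different route from the paper's. The paper defines $k_n(x)=\min\{k:a_{n,k}\ge x\}$ as the index determined by the break points, then must show that $a_{n,k_n(x)}\to x$; for that, it invokes P\'olya's theorem (uniform convergence of the distribution functions to $\Phi$) to bound the longest subinterval length $\ell_n=\max_i(a_{n,i+1}-a_{n,i})$ and show $\ell_n\to0$, from which the inversion $k_n(x)=m_n+Q(x)\sigma_n+o(\sigma_n)$ follows. You work in the opposite direction: you predict the index $k_n(x)\approx m_n+\sigma_n Q(x)$, compute the corresponding break point directly from Proposition \ref{prop:exact_b}, and then pin down $\#\{k:a_{n,k}\le x\}$ by a two-sided $\epsilon$-sandwich $a_{n,\lfloor m_n+\sigma_n(Q(x)-\epsilon)\rfloor}<x<a_{n,\lceil m_n+\sigma_n(Q(x)+\epsilon)\rceil}$. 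This avoids any explicit appeal to P\'olya's theorem and to $\ell_n$; what you use instead (that $F_n(t_n)\to\Phi(t)$ whenever $t_n\to t$) is a standard and easily verified consequence of pointwise convergence of distribution functions to a continuous limit, so the argument stands. The trade-off is that the paper's route produces $\ell_n\to0$ (and, under the Berry--Esseen strengthening of the Remark, the quantitative bound $\ell_n=O(1/\sigma_n)$) as a by-product, whereas your sandwich is a bit more self-contained and shows clearly that no uniformity or rate beyond $\sigma_n\to\infty$ is needed. One small gap to be aware of: when you write $ng_n([x,y])=ng_n([0,y])-ng_n([0,x])$ you are implicitly discarding $ng_n(\{x\})$; this is harmless because the same sandwich applied with strict inequalities shows $ng_n([0,x))$ has the identical expansion $m_n+\sigma_n Q(x)+o(\sigma_n)$, so the atom contributes only $o(\sigma_n)$, but it is worth one line to say so.
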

\begin{proof}
	For $x\in(0,1)$, define
	\[
	k_n(x)=\min\{k\in\{0,1,\dots,n+1\}:a_{n,k}\ge x\}.
	\]
	Then
	\begin{equation} 
	\label{eq:using_k_n(x)}
	\frac{ng_n([0,x))}{\s_n}=\frac{k_n(x)}{\sigma_n}.
	\end{equation}
	We now focus on estimating the asymptotic behavior of $k_n(x)$.
	
	By Proposition \ref{prop:exact_b}, we have
	\[
	a_{n,k_n(x)} 
	=\PP^\bp\left(x_n\le k_n(x)-1\right)
	=\PP^\bp\left(\frac{x_n-m_n}{\sigma_n}\le \frac{k_n(x)-1-m_n}{\sigma_n}\right).
	\]
	Moreover, the convergence in \eqref{eq:hyp_CLT} actually holds uniformly in $t\in\RR$ by Pólya's theorem 
	(pointwise convergence of a sequence of bounded non-decreasing functions on $\RR$ towards a continuous function implies uniform convergence).
	%(because the cumulative distribution functions are non-decreasing).
	So we get
	\begin{equation} 
	\label{eq:1st_step_CLT}
	\left\lvert a_{n,k_n(x)}
	-\Phi\left(\frac{k_n(x)-1-m_n}{\sigma_n}\right)\right\rvert\to0.
	\end{equation}
	On the other hand, by definition of $k_n(x)$, we have $x\le a_{n,k_n(x)}\le x+\ell_n$, 
	where  $\ell_n=\max_{i=0}^n(a_{n,i+1}-a_{n,i})$ is the length of the longest interval in the partition $\calP_n$. 
	Moreover, by Proposition \ref{prop:exact_b} again
	\begin{align}
	a_{n,i+1}-a_{n,i}
	&\le\Phi\left(\frac{i-m_n}{\sigma_n}\right)-\Phi\left(\frac{i-1-m_n}{\sigma_n}\right) \nonumber \\
	&\qquad {} + 2 \sup_{t\in\RR} 
	\left\lvert \PP^\bp\left( \frac{X_n-m_n}{\sigma_n} \leq t \right) - \Phi(t) \right\rvert. \label{eq:ref_control_ell_n}
	\end{align}
	Using \eqref{eq:hyp_CLT} and that $\Phi$ is Lipschitz on $\RR$ together with $\sigma_n \to \infty$, 
	we deduce that $a_{n,i+1} - a_{n,i} \to 0$ as $n \to \infty$ uniformly in $i$, which implies that $\ell_n \to 0$.
	In particular, we get that $a_{n,k_n(x)} \to x$ as $n\to\infty$ and, coming back to \eqref{eq:1st_step_CLT}, this yields
	\begin{equation} 
	\label{eq:2nd_step_CLT}
	\left\lvert x
	- \Phi \left( \frac{k_n(x)-1-m_n}{\sigma_n} \right) \right\rvert 
	\xrightarrow[n\to\infty]{} 0.
	\end{equation}
	Using continuity of $Q$ at $x$ and the fact that $Q=\Phi^{-1}$, we get
	\begin{equation} 
	\label{eq:3rd_step_CLT}
	\left\lvert Q(x)
	-\frac{k_n(x)-1-m_n}{\sigma_n}\right\rvert\to0
	\end{equation}
	and therefore $k_n(x)=m_n+Q(x)\sigma_n+o(\sigma_n)$.
	Coming back to \eqref{eq:using_k_n(x)}, we get, in the limit $n\to\infty$,
	\begin{equation} 
	\label{eq:expansion_G_n([0,x))}
	\frac{ng_n([0,x))}{\s_n}=\frac{m_n}{\sigma_n}+Q(x)+o(1)
	\end{equation}
	It follows that, for any $x,y\in(0,1)$ with $x\le y$, we have 
	$$
	\frac{ng_n([x,y])}{\s_n}\to Q(y)-Q(x).
	$$
\end{proof}

\begin{remark}
	Consider the following strengthening of \eqref{eq:hyp_CLT}: there is a constant $C<\infty$ such that, for all $n$, we have
	\begin{equation} 
	\label{eq:hyp_CLT_2}
	\sup_{t\in\RR}\left\lvert\PP^\bp\left(\frac{x_n-m_n}{\sigma_n}\le t\right)-\Phi(t)\right\rvert\le\frac C{\sigma_n}
	\end{equation}
	Such an assumption would correspond to a Berry--Esseen bound for the walk $(x_n)_{n\ge0}$ 
	and is known%
	\footnote{For the deterministic stratified case, with $m_n = \frac{1}{n} \sum_{k=1}^n p_k$ and $\sigma_n^2 = \sum_{k=1}^n p_k(1-p_k)$, by \cite[Theorem XVI.5.2]{Feller1971}, the left-hand side of \eqref{eq:hyp_CLT_2} is at most $6r_n/\sigma_n^3$ with $r_n = \sum_{k=1}^n p_k(1-p_k)(p_k^2+(1-p_k)^2)$. Noting that $r_n \leq 2 \sigma_n^2$ yields \eqref{eq:hyp_CLT_2} with $C=12$. As a consequence, the same bound holds a.s.\@ in the random stratified case.}
	under the assumptions of Theorems~\ref{the:renorm-pn} and~\ref{th:strat-rand-renorm-bulk}.
	Then, the same proof gives a rate of convergence: for any $\delta\in(0,1/2)$, uniformly in $\delta\le x<y\le 1-\delta$,
	\begin{equation*}	
	\frac{ng_n([x,y])}{\s_n}
	=Q(y)-Q(x)+O\left(\frac1{\sigma_n}\right).
	\end{equation*}
	The restriction to the interval $[\delta,1-\delta]$ is required to have $Q$ Lipschitz 
	in the step from \eqref{eq:2nd_step_CLT} to \eqref{eq:3rd_step_CLT}.
	Moreover, \eqref{eq:ref_control_ell_n} shows that under this stronger assumption $\ell_n=O(1/\sigma_n)$.
\end{remark}

We now state and prove convergence results for our three main cases of our interval split-merge dynamics.
The first result addresses the deterministic stratified case. 
Note that the assumptions of this result may hold even when the assumption in part (a) of Theorem \ref{thm:cv_p} does not hold. 
Recall that we write $Q$ for the quantile function of the standard normal distribution.

\begin{theorem}
	\label{the:renorm-pn}
	{\bf Deterministic stratified case:} 
	Set
	$$
	s_n=\sum_{k=1}^n p_k (1-p_k)
	$$  
	and assume that $s_n\to\infty$ as $n\to\infty$.
	Then the sequence $(ng_n/\sqrt{s_n})$ of Radon measures on $(0,1)$
	converges vaguely on that space to the Radon measure with density $Q'$ with respect to the Lebesgue measure. 
\end{theorem}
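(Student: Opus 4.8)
The plan is to apply Lemma \ref{lem:CLT} with the choices $m_n = \sum_{k=1}^n p_k$ and $\sigma_n = \sqrt{s_n} = \sqrt{\sum_{k=1}^n p_k(1-p_k)}$. Under $\PP^\bp$, the walk $(x_n)_{n\ge0}$ is a sum of independent Bernoulli$(p_k)$ random variables, so $x_n$ has mean $m_n$ and variance $s_n$. The only hypothesis of Lemma \ref{lem:CLT} that needs checking is the central limit theorem \eqref{eq:hyp_CLT}, i.e.\ that $(x_n - m_n)/\sigma_n \Rightarrow \mathcal N(0,1)$. Since we are already assuming $s_n \to \infty$, which forces $\sigma_n \to \infty$, the remaining conclusion follows verbatim from the lemma.

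First I would recall the standard criterion for the CLT for a triangular array of independent, uniformly bounded (here, $\{0,1\}$-valued, hence bounded by $1$) summands: if the total variance $s_n \to \infty$, the Lindeberg condition is automatically satisfied, because for any $\varepsilon>0$ the truncation threshold $\varepsilon\sigma_n \to \infty$ eventually exceeds the bound $1$ on each centered summand $|Y_k - p_k| \le 1$, so the Lindeberg sum is eventually zero. Thus the Lindeberg--Feller theorem gives \eqref{eq:hyp_CLT}. (Equivalently, one may invoke the Berry--Esseen bound for independent bounded summands, which gives $\sup_t|\PP^\bp((x_n-m_n)/\sigma_n \le t) - \Phi(t)| \le C \sum_{k=1}^n \EE|Y_k-p_k|^3 / s_n^{3/2} \le C s_n^{-1/2} \to 0$, since $\EE|Y_k-p_k|^3 \le \EE|Y_k-p_k|^2 = p_k(1-p_k)$; this also justifies the rate of convergence mentioned in the Remark.)

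With \eqref{eq:hyp_CLT} established, Lemma \ref{lem:CLT} directly yields that $ng_n([x,y])/\sigma_n \to Q(y) - Q(x)$ for all $x,y \in (0,1)$ with $x \le y$, which is exactly the vague convergence of $(ng_n/\sqrt{s_n})$ to the Radon measure with density $Q'$, as claimed.

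There is essentially no serious obstacle here: the theorem is a clean specialization of Lemma \ref{lem:CLT}, and the main (minor) point to get right is the verification of the Lindeberg condition from $s_n \to \infty$ alone, together with noting that $m_n$ need not grow like $n$ in any controlled way — but this is harmless, since Lemma \ref{lem:CLT} only uses $m_n$ through the cancelling term $m_n/\sigma_n$ in \eqref{eq:expansion_G_n([0,x))}, which disappears when one passes to differences $ng_n([x,y])/\sigma_n$.
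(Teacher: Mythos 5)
Your proof is correct and takes the same approach as the paper: apply Lemma \ref{lem:CLT} with $m_n=\sum_{k=1}^n p_k$ and $\sigma_n=\sqrt{s_n}$, verifying \eqref{eq:hyp_CLT} via the Lindeberg CLT for the sum of independent Bernoulli$(p_k)$ variables. The paper simply cites Lindeberg's CLT without spelling out the Lindeberg-condition check that you helpfully supply (boundedness of the summands plus $s_n\to\infty$).
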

\begin{proof}
	In this case, under $\PP^\bp$, 
	$x_n$ is a sum of independent Bernoulli random variables with success probabilities $p_1,\dots,p_n$, 
	so \eqref{eq:hyp_CLT} holds by Lindeberg's central limit theorem, with 
	$$
	m_n=\sum_{k=1}^np_k,\q
	\sigma_n=\sqrt{s_n}.
	$$
	Hence the result follows from Lemma \ref{lem:CLT}.
\end{proof}

We obtain as a corollary the following result for the random stratified case.
This can alternatively be shown directly from Lemma \ref{lem:CLT}. 

\begin{theorem}
	\label{th:strat-rand-renorm-bulk}
	{\bf Random stratified case:} 
	Set
	$$
	s=\EE(P_1(1-P_1))
	$$
	and assume that $\PP(P_1\in(0,1))>0$.
	Then, almost surely, the sequence $(\sqrt{n}G_n/\sqrt{s})$ of Radon measures on $(0,1)$
	converges vaguely on that space to the Radon measure with density $Q'$ with respect to the Lebesgue measure. 
\end{theorem}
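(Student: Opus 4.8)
The plan is to derive this from Theorem~\ref{the:renorm-pn} by freezing the random environment. First I would note that the hypothesis $\PP(P_1\in(0,1))>0$ forces $s=\EE(P_1(1-P_1))>0$, since $P_1(1-P_1)>0$ on the event $\{P_1\in(0,1)\}$. Applying the strong law of large numbers to the i.i.d.\ sequence $(P_k(1-P_k))_{k\ge1}$, I obtain that, on an event $\O_1$ of probability one, the quantity $s_n=\sum_{k=1}^n P_k(1-P_k)$ tends to infinity and moreover $s_n/(ns)\to1$.

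On $\O_1$, the realized splitting proportions $p_k=P_k$ form a deterministic sequence satisfying the hypothesis $s_n\to\infty$ of Theorem~\ref{the:renorm-pn}, so that theorem yields $n g_n(\bPn)/\sqrt{s_n}\to Q^\prime$ vaguely on $(0,1)$. Since $G_n=g_n(\bPn)$ by the identification made in Section~\ref{sec:repr}, and since the scalar $\sqrt{s_n}/\sqrt{ns}$ converges to $1$ on $\O_1$, I would conclude by writing, for every continuous compactly supported $\varphi$ on $(0,1)$,
\[
\int \varphi \, d\Bigl(\tfrac{\sqrt n\,G_n}{\sqrt s}\Bigr)
=\frac{\sqrt{s_n}}{\sqrt{ns}}\int \varphi \, d\Bigl(\tfrac{n g_n(\bPn)}{\sqrt{s_n}}\Bigr)
\longrightarrow \int \varphi(x)\,Q^\prime(x)\,dx,
\]
which is exactly the claimed vague convergence of $(\sqrt n\,G_n/\sqrt s)$.

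For the statement ``almost surely'' to be unambiguous I would add a line on measurability: vague convergence on $(0,1)$ is characterized by convergence of the integrals against a countable convergence-determining family of continuous compactly supported test functions, and each map $\bp\mapsto g_n(\bp)$ is measurable (Section~\ref{sec:repr}), so the relevant event is a countable intersection of measurable sets and contains $\O_1$. Alternatively, as indicated after the statement, one can bypass Theorem~\ref{the:renorm-pn} and invoke Lemma~\ref{lem:CLT} directly: conditionally on $\bPn$, the walk $(X_n)$ is a sum of independent Bernoulli variables with success probabilities $P_1,P_2,\dots$, to which Lindeberg's central limit theorem applies --- the Lindeberg condition being automatic once $s_n\to\infty$ because the increments are bounded --- so \eqref{eq:hyp_CLT} holds with $m_n=\sum_{k=1}^n P_k$ and, using Slutsky's lemma together with $s_n/(ns)\to1$, with $\sigma_n=\sqrt{ns}$. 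I do not expect a genuine obstacle here; the only points requiring a little care are the positivity of $s$, the passage from the ``intrinsic'' normalization $\sqrt{s_n}$ to the stated one $\sqrt{ns}$, and the measurability remark above.
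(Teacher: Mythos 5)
Your argument is exactly the paper's: condition on the environment, factor $\sqrt n\,G_n/\sqrt s$ as $nG_n/\sqrt{s_n}$ times $\sqrt{s_n/(ns)}$, apply Theorem~\ref{the:renorm-pn} to the first factor (noting $s_n\to\infty$ a.s.\ since $s>0$), and use the strong law of large numbers on the second. The measurability remark and the alternative route through Lemma~\ref{lem:CLT} are both fine but add nothing the paper hasn't already noted.
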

\begin{proof} 
	Condition on the stratified family of splitting proportions $(P_n)_{n\ge1}$ and write
	\begin{align*}
	\frac{\sqrt nG_n}{\sqrt s}
	=\frac{nG_n}{\sqrt{\sum_{k=1}^n P_k(1-P_k)}}\times\sqrt{\frac{\sum_{k=1}^n P_k(1-P_k)}{ns}}.
	\end{align*}
	Theorem~\ref{the:renorm-pn} applies to the first factor on the right, 
	while the second factor tends to $1$ almost surely by the strong law of large numbers.
\end{proof}

Our final result in this subsection describes what happens in the case of fully random case.

\begin{theorem}
	\label{the:renorm-random-full}
	{\bf Fully random case:} 
	Set 
	$$
	\bar p=\EE(P_{1,1}),\q s=\bar p(1-\bar p)
	$$
	and assume that $\PP(P_{1,1}\in(0,1))>0$.
	Then, almost surely, the sequence $(\sqrt{n}G_n/\sqrt{s})$ of Radon measures on $(0,1)$
	converges vaguely on that space to the Radon measure with density $Q'$ with respect to the Lebesgue measure. 
\end{theorem}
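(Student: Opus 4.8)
The plan is to deduce the result from Lemma~\ref{lem:CLT} by checking, for almost every environment $\bPn=(P_{n,k})$, that its hypothesis~\eqref{eq:hyp_CLT} holds for the walk run in that environment, with
$$
m_n=n\bar p,\qquad \sigma_n=\sqrt{ns},\qquad s=\bar p(1-\bar p).
$$
Since $\PP(P_{1,1}\in(0,1))>0$ forces $\bar p\in(0,1)$, we have $s>0$ and $\sigma_n\to\infty$; as $ng_n/\sigma_n=\sqrt n\,g_n/\sqrt s$ and $G_n=g_n(\bPn)$, Lemma~\ref{lem:CLT} then gives the theorem. The annealed walk $(X_n)_{n\ge0}$ is a simple random walk: at step $n$ it reads the coordinate $P_{n,X_{n-1}+1}$, never used before, so conditionally on $X_{n-1}$ its increment is $\mathrm{Bernoulli}(\bar p)$; hence $X_n\sim\mathrm{Binomial}(n,\bar p)$ and the annealed central limit theorem holds with the above $m_n,\sigma_n$. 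The whole content of the theorem is therefore the corresponding \emph{quenched} statement: almost surely in $\bPn$, under $\PP(\,\cdot\mid\bPn)$ one has $(X_n-m_n)/\sigma_n\Rightarrow\mathcal N(0,1)$.

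The core estimate uses two independent copies of the walk in one environment. Carry an extra family $(\tilde U_{n,k})$ independent of everything, let $(\tilde X_n)$ be the walk built from it and the \emph{same} $\bPn$, and set $D_j=X_j-\tilde X_j$. Writing $Y_{n,k}=\ind_{\{U_{n,k}\le P_{n,k}\}}$ and $\tilde Y_{n,k}=\ind_{\{\tilde U_{n,k}\le P_{n,k}\}}$, one checks that for $j\ne j'$ the increments $Y_{j,X_{j-1}+1}$ and $\tilde Y_{j',\tilde X_{j'-1}+1}$ are uncorrelated (the later one reads a fresh coordinate), while $j=j'$ contributes $\mathrm{Var}(P_{1,1})\,\PP(D_{j-1}=0)$ to $\mathrm{Cov}(X_n,\tilde X_n)$; hence
$$
\mathrm{Cov}(X_n,\tilde X_n)=\mathrm{Var}(P_{1,1})\sum_{j=0}^{n-1}\PP(D_j=0).
$$
Now $D$ is a mean-zero martingale with increments in $\{-1,0,1\}$ which, away from $0$, moves as a lazy symmetric nearest-neighbour walk on $\ZZ$ (jump probability $2\bar p(1-\bar p)>0$) and at $0$ has jump probability $2\,\EE(P_{1,1}(1-P_{1,1}))>0$; thus $D$ is a null-recurrent Markov chain on $\ZZ$ (recurrent as a one-dimensional symmetric walk, null since it has no stationary probability measure). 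By the strong Markov property at $0$ its successive returns form an i.i.d.\ renewal process with infinite mean spacing, so $\#\{j<n:D_j=0\}=o(n)$ almost surely and in expectation, and therefore $\mathrm{Cov}(X_n,\tilde X_n)=o(n)=o(\sigma_n^2)$.

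Given this, the vector increments $\big(Y_{j,X_{j-1}+1},\,\tilde Y_{j,\tilde X_{j-1}+1}\big)_{j\ge1}$ form a bounded martingale-difference array whose conditional covariance matrix is $sI_2$ plus $\mathrm{Var}(P_{1,1})\ind_{\{D_{j-1}=0\}}$ times the off-diagonal, so its time-average converges in probability to $sI_2$ by the renewal bound; the multivariate martingale central limit theorem then yields $\big((X_n-m_n)/\sigma_n,(\tilde X_n-m_n)/\sigma_n\big)\Rightarrow\mathcal N(0,I_2)$. In particular, for each fixed $t$,
$$
\mathrm{Var}\big(\PP(X_n\le m_n+t\sigma_n\mid\bPn)\big)=\mathrm{Cov}\big(\ind_{\{X_n\le m_n+t\sigma_n\}},\ind_{\{\tilde X_n\le m_n+t\sigma_n\}}\big)\to0,
$$
the variance on the left being over $\bPn$; together with the annealed CLT this gives $\PP(X_n\le m_n+t\sigma_n\mid\bPn)\to\Phi(t)$ in probability in $\bPn$, for every $t$.

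The remaining — and, I expect, principal — difficulty is to upgrade this last convergence to an almost-sure one, for all rational $t$ simultaneously, since the variances above are only of order $\sigma_n^{-1}\asymp n^{-1/2}$ and hence not summable. For fixed rational $t$, one notes from the recursion $A_{n,k}=P_{n,k}A_{n-1,k-1}+(1-P_{n,k})A_{n-1,k}$ that, along the moving index $k=\lfloor m_n+t\sigma_n\rfloor$, the quantity $F_n(t):=\PP(X_n\le\lfloor m_n+t\sigma_n\rfloor\mid\bPn)$ evolves (in $n$, with respect to $\sigma(P_{j,k}:j\le n)$) as a martingale up to a drift whose steps alternate in sign and cancel negligibly; combining Doob's $L^2$ maximal inequality over geometric blocks $[n_k,n_{k+1}]$ with the $O(n^{-1/2})$ second-moment bound (which also follows from the covariance identity and the binomial Berry--Esseen bound) and the Borel--Cantelli lemma gives $F_n(t)\to\Phi(t)$ almost surely. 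Pólya's theorem and monotonicity then upgrade this to~\eqref{eq:hyp_CLT} for almost every $\bPn$, and Lemma~\ref{lem:CLT} concludes. The null-recurrence input for $D$ should be routine; the careful estimation of the drift in this approximate martingale and the block maximal-inequality bookkeeping needed for the almost-sure statement are where the real work lies, the in-probability statement being comparatively soft.
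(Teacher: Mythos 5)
Your reduction to Lemma~\ref{lem:CLT} with $m_n=n\bar p$, $\sigma_n=\sqrt{ns}$ is exactly the paper's: both proofs reduce the theorem to showing that, for almost every environment $\bPn$, the quenched walk satisfies the CLT \eqref{eq:hyp_CLT}. At that point the paths diverge. The paper simply invokes the almost-sure invariance principle for space-time RWDRE of Rassoul-Agha and Sepp\"al\"ainen~\cite{Rassoul05} (noting that $\PP(P_{1,1}\in(0,1))>0$ gives their ellipticity hypothesis (ME)), and is done. You instead attempt to prove the quenched CLT from scratch. Your two-copy covariance identity, the Markov-chain analysis of the difference walk $D=X-\tilde X$ (lazy symmetric nearest-neighbour, null recurrent, local time $o(n)$), and the resulting joint martingale CLT for $(X_n,\tilde X_n)$ together constitute a sound and standard route to the \emph{in-probability} quenched CLT: $\PP(X_n\le m_n+t\sigma_n\mid\bPn)\to\Phi(t)$ in probability, for each $t$. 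This part is essentially correct.

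The genuine gap is precisely the one you flag yourself: the upgrade from in-probability to almost-sure convergence along the full sequence $n$. You correctly observe that the variance of the quenched CDF decays only like $n^{-1/2}$, so Borel--Cantelli along the full sequence fails, and you propose treating $F_n(t)=\PP(X_n\le\lfloor m_n+t\sigma_n\rfloor\mid\bPn)$ as an approximate martingale in $n$ and applying Doob's maximal inequality over geometric blocks. But the crucial assertion that the drift of this process ``alternates in sign and cancels negligibly'' is stated without any justification, and it is not obviously true: the conditional expectation of $A_{n,k}$ given $\calF_{n-1}$ is $\bar p\,A_{n-1,k-1}+(1-\bar p)A_{n-1,k}$, which differs from $A_{n-1,k}$ by a term of order $\ell_{n-1}\asymp n^{-1/2}$, and the moving index $k=\lfloor m_n+t\sigma_n\rfloor$ introduces further non-martingale shifts; controlling the accumulated drift over a block of length $\asymp n$ requires a real argument. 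This a.s.\ upgrade is exactly the technical content of~\cite{Berard04} and~\cite{Rassoul05}, and is the hard part of the theorem; your sketch does not close it, so the proof as written is incomplete. To be fair, you are explicit that this is ``where the real work lies,'' so the diagnosis is right -- but the work has not been done, whereas the paper outsources it to the literature.
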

\begin{proof}
	Set
	$$
	m_n=np,\q\sigma_n=\sqrt{ns},
	$$
	and consider the event $\O^*=\{\bP\in B\}$, where
	$$
	B=\{\bp:\PP^\bp((x_n-m_n)/\s_n\le t)\to\Phi(t)\text{ for all $t\in\RR$}\}.
	$$
	This event can be rewritten as follows
	$$
	\O^*=\{\PP((X_n-m_n)/\s_n\le t|\bP)\to\Phi(t)\text{ for all }t\in\QQ\}.
	$$
	Therefore, by the almost sure central limit theorem for the RWDRE, which has been proved in~\cite{Rassoul05}, we have $\PP(\O^*)=1$. 
	In particular, the assumption $\PP(P_{1,1}\in(0,1))>0$ implies the ellipticity assumption, Hypothesis (ME), in that paper.
	Then, by Lemma \ref{lem:CLT}, on $\O^*$, for all $x,y\in(0,1)$ with $x\le y$,
	$$
	\frac{\sqrt nG_n([x,y])}{\sqrt s}=\frac{ng_n(\bP)([x,y])}{\s_n}\to Q(y)-Q(x),
	$$
	which concludes the proof.
\end{proof}

%%%%%%% Removed as the cited result is for discrete-space %%%%%%%%%%%%%%%%%%%%%%%%%%%%%%%%%%
%\begin{remark}
%For the balanced case of Theorem~\ref{the:renorm-random-full} when $\EE(V_{1,1})=1/2$, 
%the central limit theorem of \cite{Berard04} will also suffice.
%\end{remark}
%%%%%%%%%%%%%%%%%%%%%%%%%%%%%%%%%%%%%%%%%%%%%%%%%%%%%%%%%%%%%%%%%%%%%%%%%%%%%%%%%%%%%%%%%

\begin{remark}
	The limits in Theorem \ref{thm:cv_p} depend only on the `average' splitting proportions. 
	Comparing with Theorems \ref{the:renorm-pn}, \ref{th:strat-rand-renorm-bulk} and \ref{the:renorm-random-full}, 
	on the other hand, shows that the speed of convergence to the limits in Theorem~\ref{thm:cv_p} depends on other factors. 
	For example:
	\begin{itemize}
		\item choosing $p_n=1/2$ for all $n$ in the deterministic stratified case, 
		we get $s_n=n/4$, so the scaling factor in Theorem~\ref{the:renorm-pn} becomes $2\sqrt n$;
		\item choosing $P_1$ to be uniformly distributed on $[0,1]$ in the random stratified case,
		we get $s=1/6$, so the scaling factor in Theorem~\ref{th:strat-rand-renorm-bulk} becomes $\sqrt{6n}$;
		\item choosing $P_{1,1}$ to be uniformly distributed on $[0,1]$ in the fully random case, 
		we get $s=1/4$, so the scaling factor in Theorem~\ref{the:renorm-random-full} becomes $2\sqrt n$.
	\end{itemize}
	This indicates that there is a smaller residual mass in $G_n$ away from the endpoints of the unit interval 
	in the random stratified case than in the two other cases, 
	hence a higher speed of `escape' of the mass to the endpoints, though not by an order of magnitude.
	Furthermore, it can look surprising that the fully random case has exactly the same multiplicative factor as the case $p_n=1/2$: 
	this can be explained by the fact that, for later split-merge steps, a large number of random variables are involved, 
	resulting in an averaging effect.
\end{remark}

\subsection{Near the endpoints}
\label{sec:near_endpoints}
We now look more closely at how the mass of the empirical distribution of break points 
moves towards the endpoints of the interval $[0,1]$. 
We make statements for the behavior close to 0.
Analogous statements near 1 can be obtained by flipping the interval.

Consider the transformed empirical distributions $\tilde g_n$ defined by
$$
\tilde g_n=\frac1n\sum_{k=1}^n\d_{\tilde a_{n,k}},\q \tilde a_{n,k}=a_{n,k}^{1/n}.
$$
As usual, we will change to upper-case and write $\tilde G_n$ when the splitting proportions are random.
Note that $\tilde g_n$ is simply the pushforward of $g_n$ by $x \mapsto x^{1/n}$ and the following relation holds, for $x,y\in[0,1]$ with $x\le y$,
$$
\tilde g_n([x,y])=g_n([x^n,y^n]).
$$
We start with a lemma relating the limiting behavior of $\tilde g_n$ to large deviation estimates on the lower tail
for the associated random walk.

\begin{lemma} 
	\label{lem:LDP}
	Fix a family of splitting proportions $\bp=(p_{n,k}:(n,k)\in I)$. 
	Consider the probability measure $\PP^\bp$ on $\O_0$ defined in Section \ref{sec:repr} and the
	random process $(x_n)_{n\ge0}$ on $\O_0$ defined at \eqref{XN}. 
	Assume that there exists $\bar p\in(0,1]$ and a function $I:(0,\bar p]\to[0,\infty)$, 
	continuous and decreasing with $I(\bar p)=0$, and such that, for all $\a\in(0,\bar p)$, 
	\begin{equation}  
	\label{eq:hyp_LDP}
	\frac1n\log\PP^\bp\left(x_n\le\a n\right)\to-I(\alpha),\q\text{ as $n\to\infty$}. 
	\end{equation} 
	Define $I(0)\in(0,\infty]$ and $x_*\in[0,1)$ by
	$$
	I(0)=\lim_{\alpha \to 0^+}I(\alpha),\q x_*=e^{-I(0)}.
	$$
	For $x\in[x_*,1]$, let $\alpha_I(x)$ be the unique solution $\alpha\in[0,\bar p]$ of the equation $I(\alpha)=\log(1/x)$.
	Write $\tilde g$ for the probability measure on $[0,1]$ with distribution given by
	\begin{equation} 
	\label{eq:limit_x^n}
	\tilde g([0,x])=
	\begin{cases} 
	0,&\text{if }x\in[0,x_*], \\
	\alpha_I(x),&\text{if }x\in(x_*,1).
	\end{cases}
	\end{equation}
	Then, as $n\to\infty$, we have
	$$
	\tilde g_n\Rightarrow\tilde g\q\text{weakly on $[0,1]$}.
	$$
\end{lemma}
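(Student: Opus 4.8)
The plan is to mimic the structure of the proof of Lemma~\ref{lem:LLN}: translate a statement about $\tilde g_n$ into one about the break points $a_{n,k}$, then use Proposition~\ref{prop:exact_b} to convert that into a probabilistic statement about $x_n$ to which the large deviation hypothesis~\eqref{eq:hyp_LDP} applies. Since weak convergence of probability measures on $[0,1]$ is equivalent to pointwise convergence of distribution functions at all continuity points of the limit, it suffices to show $\tilde g_n([0,x]) \to \tilde g([0,x])$ for every $x \in (0,1)$ that is a continuity point of $x \mapsto \tilde g([0,x])$; by \eqref{eq:limit_x^n} and the continuity and monotonicity of $\alpha_I$, the only possible discontinuity is at $x=x_*$ (and only when $I(0)<\infty$), so it is enough to treat all $x \in (0,1) \setminus \{x_*\}$.

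First I would fix $x \in (0,1)$ and write, using $\tilde g_n([0,x]) = g_n([0,x^n])$ together with the identity $g_n([0,y)) = k_n(y)/n$ from the proof of Lemma~\ref{lem:CLT} (where $k_n(y) = \min\{k : a_{n,k} \ge y\}$), that $\tilde g_n([0,x])$ is, up to an error of order $1/n$, equal to $k_n(x^n)/n$; and $k_n(x^n)/n \ge \alpha$ is essentially equivalent to $a_{n,\lfloor \alpha n\rfloor} < x^n$, i.e.\ to $a_{n,\lfloor \alpha n \rfloor} < x^n$. By Proposition~\ref{prop:exact_b}, $a_{n,\lfloor \alpha n\rfloor} = \PP^\bp(x_n \le \lfloor \alpha n\rfloor - 1) = \PP^\bp(x_n \le \alpha n)$ up to the obvious integer-part adjustments. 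So the question reduces to: for which $\alpha$ does $\PP^\bp(x_n \le \alpha n)$ decay faster than $x^n = e^{-n\log(1/x)}$, and for which does it decay slower? By \eqref{eq:hyp_LDP}, $\frac1n\log\PP^\bp(x_n \le \alpha n) \to -I(\alpha)$, so $a_{n,\lfloor\alpha n\rfloor} < x^n$ eventually when $I(\alpha) > \log(1/x)$ and $a_{n,\lfloor\alpha n\rfloor} > x^n$ eventually when $I(\alpha) < \log(1/x)$ (using $I$ decreasing, so these regions are intervals in $\alpha$). Since $I$ is continuous and strictly decreasing from $I(0)$ down to $I(\bar p)=0$, the threshold is exactly $\alpha = \alpha_I(x)$ when $x > x_* = e^{-I(0)}$, while if $x < x_*$ then $\log(1/x) > I(0) \ge I(\alpha)$ for all $\alpha$, forcing $a_{n,\lfloor\alpha n\rfloor} > x^n$ for every $\alpha>0$ and hence $\tilde g_n([0,x]) \to 0$. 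Assembling these comparisons gives $k_n(x^n)/n \to \alpha_I(x)$ for $x \in (x_*,1)$ and $k_n(x^n)/n \to 0$ for $x \in (0,x_*)$, which is precisely $\tilde g_n([0,x]) \to \tilde g([0,x])$.

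The main obstacle is bookkeeping at the boundary cases and with integer parts: one must be careful that $\alpha_I(x) \in \{0,\bar p\}$ or $x$ near $x_*$ or $1$ do not cause the two-sided squeeze to fail, and that the hypothesis~\eqref{eq:hyp_LDP} (stated for $\alpha \in (0,\bar p)$ only) still pins down the limit at the endpoints; this is handled by monotonicity of $\tilde g_n([0,\cdot])$ and of $\tilde g([0,\cdot])$, approximating $x_*$ and $1$ from inside $(x_*,1)$ and using continuity of $\alpha_I$ there, together with the trivial bounds $0 \le \tilde g_n([0,x]) \le 1$. A minor additional point is to confirm that weak convergence follows from convergence of the distribution functions off the single possible atom location $x_*$: this is the standard Portmanteau criterion, since the complement of the continuity set of $\tilde g([0,\cdot])$ has $\tilde g$-measure zero except possibly at $x_*$, and the distribution function of $\tilde g$ is right-continuous with the prescribed values, so matching it on a dense set of continuity points suffices.
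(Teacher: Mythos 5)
Your proposal is correct and takes essentially the same approach as the paper: both proofs use Proposition~\ref{prop:exact_b} to rewrite $a_{n,\lfloor\alpha n\rfloor}$ as $\PP^{\bp}(x_n\le\lfloor\alpha n\rfloor-1)$, apply the large deviation hypothesis~\eqref{eq:hyp_LDP} to obtain $a_{n,\lfloor\alpha n\rfloor}^{1/n}\to e^{-I(\alpha)}$, and compare this against $x$ to identify the threshold $\alpha_I(x)$, using monotonicity and continuity to dispose of the remaining boundary cases. One small inaccuracy worth noting: $x\mapsto\tilde g([0,x])$ is in fact continuous at $x_*$ (since $\alpha_I(x_*)=0$) and indeed on all of $[0,1)$, the only atom of $\tilde g$ being at $1$; the paper exploits this to reduce directly to showing $\tilde g_n([0,x])\to\alpha_I(x)$ for $x\in(x_*,1)$, whereas your plan is slightly more conservative in flagging $x_*$ as a possible discontinuity, but this does not affect the validity of the argument.
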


\begin{remark} 
	\label{rem:alpha_I}
	Under the assumptions of the lemma, the map $I:[0,\bar p]\to[0,\log(1/x_*)]$ is one-to-one, 
	which implies that $\alpha_I(x)$ is well-defined for $x\in[x_*,1]$. 
	Moreover, $\alpha_I$ is continuous and increasing on $[x_*,1]$ with $\alpha_I(x_*)=0$ and $\alpha_I(1)=\bar p$.
	Hence $x\mapsto\tilde g([0,x])$ is continuous on $[0,1)$, so $g$ is atomless on $[0,1)$, and
	$$
	\tilde g(\{1\})=1-\bar p.
	$$
\end{remark}

\begin{proof}
	Since $\tilde g_n$ and $\tilde g$ are supported on $[0,1]$, and $\a_I$ is continuous on $[x_*,1]$ with $\a_I(x_*)=0$,
	it will suffice to show that $\tilde g_n([0,x])\to\a_I(x)$ for all $x\in(x_*,1)$.
	Fix $x\in(x_*,1)$ and note that
	\begin{equation*} 
	\tilde g_n([0,x])
	=\frac1n\max\left\{k\ge0:a_{n,k}^{1/n}\le x\right\}. 
	\end{equation*}
	We use Proposition \ref{prop:exact_b} and \eqref{eq:hyp_LDP}, together with continuity of $I$, to see that, 
	for all $\a\in(0,\bar p)$, as $n\to\infty$,
	\begin{equation*} 
	\log a_{n,\lfloor\alpha n\rfloor}^{1/n} 
	=\frac1n\log a_{n,\lfloor\alpha n\rfloor} 
	=\frac1n\log\PP^\bp(x_n\le\lfloor\alpha n\rfloor-1)
	\to-I(\alpha)
	\end{equation*} 
	so
	$$
	a_{n,\lfloor\alpha n\rfloor}^{1/n}\to e^{-I(\a)}.
	$$
	This implies the desired limit for $\tilde g_n([0,x])$.
\end{proof}

We now apply this lemma to the case of deterministic stratified case. 

\begin{theorem}
	\label{th: renorm-bound}
	{\bf Deterministic stratified case:} 
	Assume that, for some probability measure $H$ on $[0,1]$ with $H(\{0,1\})<1$, 
	the following weak limit holds on $[0,1]$ as $n\to\infty$,
	\begin{equation} 
	\label{e:ass.H}
	\frac1n\sum_{k=1}^n\delta_{p_k}\Rightarrow H.
	\end{equation}
	Set
	\[
	\bar p=\int_0^1t\,H(dt).
	\]
	Then $\bar p\in(0,1)$ and, for all $\a\in(0,\bar p)$, there is a unique $\th=\th(\a)\in\RR$ such that
	\begin{equation} \label{e:theta}
	\int_0^1\frac{te^\th}{1-t+te^\th}\,H(dt)=\a.
	\end{equation} 
	Define, for $\a\in(0,\bar p)$,
	\[
	I(\alpha)=\alpha\theta(\alpha)-\int_0^1\log\bigl(1-t+te^{\theta(\alpha)}\bigr)\,H(dt).
	\]
	Then $I$ is continuous and decreasing on $(0,\bar p)$ and $I(\a)\to0$ as $\a\to\bar p$.
	Moreover, in the limit $n\to\infty$, we have
	$$
	\tilde g_n\Rightarrow\tilde g\q\text{ weakly on $[0,1]$}
	$$
	where $\tilde g$ is defined as in Lemma \ref{lem:LDP}.
\end{theorem}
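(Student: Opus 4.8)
The plan is to prove, for the auxiliary walk $(x_n)_{n\ge0}$ under $\PP^\bp$, a large deviation principle for the lower tail with rate function $I$ as in the statement, and then to quote Lemma~\ref{lem:LDP}. Under $\PP^\bp$, $x_n=\sum_{m=1}^n Y_m$ is a sum of independent Bernoulli variables with parameters $p_1,\dots,p_n$, so the natural tool is the Gärtner--Ellis theorem. First I would compute the normalized log-moment generating function: for $\theta\in\RR$,
$$
\Lambda_n(\theta)=\frac1n\log\EE^\bp\bigl(e^{\theta x_n}\bigr)=\frac1n\sum_{m=1}^n\log\bigl(1-p_m+p_m e^{\theta}\bigr).
$$
For fixed $\theta$ the map $t\mapsto\log(1-t+te^{\theta})$ is continuous and bounded on $[0,1]$, so \eqref{e:ass.H} yields $\Lambda_n(\theta)\to\Lambda(\theta):=\int_0^1\log(1-t+te^{\theta})\,H(dt)$, finite for every $\theta$. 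Differentiating under the integral (legitimate, as the $\theta$-derivatives of the integrand are bounded uniformly in $t\in[0,1]$) shows $\Lambda$ is smooth with $\Lambda'(\theta)=\int_0^1\frac{te^{\theta}}{1-t+te^{\theta}}\,H(dt)$ and $\Lambda''(\theta)=\int_0^1\frac{t(1-t)e^{\theta}}{(1-t+te^{\theta})^2}\,H(dt)$.

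The structural input is $H(\{0,1\})<1$, i.e.\ $H((0,1))>0$: this forces $\Lambda''>0$ everywhere, so $\Lambda$ is strictly convex and $\Lambda'$ is a strictly increasing homeomorphism from $\RR$ onto the (non-degenerate) open interval $\bigl(H(\{1\}),\,1-H(\{0\})\bigr)$, with $\Lambda'(0)=\bar p$; in particular $\bar p\in(0,1)$. Since $\int_0^1\frac{te^{\theta}}{1-t+te^{\theta}}\,H(dt)$ is exactly the left-hand side of \eqref{e:theta}, the equation $\Lambda'(\theta)=\alpha$ has a unique solution $\theta(\alpha)$ for every $\alpha$ in that interval, and $I(\alpha)=\theta(\alpha)\alpha-\Lambda(\theta(\alpha))$ is precisely the Legendre transform $\Lambda^*(\alpha)$. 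From strict convexity and smoothness of $\Lambda$ one reads off that $I=\Lambda^*$ is finite, continuous and strictly decreasing on $\bigl(H(\{1\}),\bar p\bigr]$, with $I(\bar p)=\Lambda^*(\bar p)=-\Lambda(0)=0$ and $I(\alpha)\to0$ as $\alpha\uparrow\bar p$.

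To upgrade this to the tail estimate \eqref{eq:hyp_LDP}, I would invoke Gärtner--Ellis: $x_n/n$ satisfies an LDP with good rate function $\Lambda^*$, and combining the upper bound on the closed half-line $(-\infty,\alpha]$ with the lower bound on an open interval $(\alpha-\epsilon,\alpha)$ of exposed points, then letting $\epsilon\to0$ and using that $\Lambda^*$ is decreasing and left-continuous on $(-\infty,\bar p]$, gives $\frac1n\log\PP^\bp(x_n\le\alpha n)\to-\Lambda^*(\alpha)=-I(\alpha)$. With $I$ continuous and decreasing on $(0,\bar p]$ and $I(\bar p)=0$, Lemma~\ref{lem:LDP} then delivers $\tilde g_n\Rightarrow\tilde g$ with $\tilde g$ as defined there.

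The step needing the most care is the behavior of $I$ near the left endpoint $\alpha\to0$. The Gärtner--Ellis argument gives \eqref{eq:hyp_LDP} cleanly on $\bigl(H(\{1\}),\bar p\bigr)$; when $H(\{1\})=0$ this is all of $(0,\bar p)$ and we are done. When $H(\{1\})>0$, a positive fraction of the $p_m$ accumulate near $1$, the corresponding Bernoulli variables equal $1$ with probability tending to $1$, and $\PP^\bp(x_n\le\alpha n)$ is super-exponentially small for $\alpha<H(\{1\})$; equivalently $I(\alpha)=+\infty$ on an initial interval, so there $a_{n,\lfloor\alpha n\rfloor}^{1/n}\to0$ and $\tilde g$ carries no mass. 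One then checks that the proof of Lemma~\ref{lem:LDP} still runs with $I$ permitted to take the value $+\infty$ near $0$, the left edge of the support of $\tilde g$ being $x_*=e^{-I(0)}$ with $I(0)=\lim_{\alpha\to0^+}I(\alpha)$, as in that lemma. The remaining hypotheses of Gärtner--Ellis are immediate, since $\Lambda$ is finite and differentiable on all of $\RR$.
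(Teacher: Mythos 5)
Your proposal follows exactly the paper's route: compute the scaled log-moment generating function $\Lambda_n$, pass to the limit $\Lambda(\theta)=\int_0^1\log(1-t+te^\theta)\,H(dt)$ via \eqref{e:ass.H}, apply G\"artner--Ellis to obtain the LDP for $x_n/n$ under $\PP^\bp$ with good rate function $\Lambda^*$, identify $I=\Lambda^*$ on the relevant range through the stationarity equation $\Lambda'(\theta)=\alpha$, and then invoke Lemma~\ref{lem:LDP}. You go further than the paper in one respect: you observe that $\Lambda'$ maps $\RR$ onto the \emph{open} interval $\bigl(H(\{1\}),\,1-H(\{0\})\bigr)$, so that when $H(\{1\})>0$ the equation \eqref{e:theta} has no solution for $\alpha\in(0,H(\{1\})]$, and the theorem's assertion that $\theta(\alpha)$ exists for every $\alpha\in(0,\bar p)$ is, as literally stated, not correct in that regime. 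The paper's proof passes over this silently (its claim that the supremum defining $\Lambda^*(\alpha)$ is attained at $\theta(\alpha)$ for all $\alpha\in[0,\bar p]$ fails there). Your remedy --- setting $I=\Lambda^*=+\infty$ on $(0,H(\{1\})]$, noting that $\PP^\bp(x_n\le\alpha n)$ then decays super-exponentially so $a_{n,\lfloor\alpha n\rfloor}^{1/n}\to0$, and remarking that the proof of Lemma~\ref{lem:LDP} extends to permit $I=+\infty$ near $0$ --- is the right patch, though to invoke Lemma~\ref{lem:LDP} verbatim one would indeed need to record that extension explicitly, since the lemma as stated takes $I$ finite on $(0,\bar p]$. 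Apart from this legitimate extra care near the left endpoint, the argument is the same as the paper's and is correct.
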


\begin{remark} 
	\label{rk:det.endpoints}
	%	First note that assumption \eqref{e:ass.H} implies the assumption in
	%	part~(a) of Theorem \ref{thm:cv_p}, and the notation for $\bar{p}$ is
	%	consistent in the two results.
	%	Furthermore, it is elementary that
	The hypothesis \eqref{e:ass.H} implies in particular that $\frac1n\sum_{k=1}^np_k$ converges to $\bar p$ as $n\to\infty$,
	and hence, by Theorem \ref{thm:cv_p}, that
	$$
	g_n\Rightarrow\bar p\d_0+(1-\bar p)\d_1\q\text{weakly on $[0,1]$}.
	$$
	Moreover \eqref{e:ass.H} is also symmetric under flipping the interval $[0,1]$, so \eqref{e:ass.H} implies
	an analogous limiting statement for $g_n([1-x^n,1])$.
\end{remark}

\begin{remark} 
	It is straightforward to check from \eqref{e:theta} that as $\a\to0$ we have $\th(\a)\to-\infty$ and $\a\th(\a)\to0$, so 
	$$
	\lim_{\a\to0} I(\a) = -\int_0^1\log(1-t)\,H(dt). 
	$$
	Hence, in the notation of Lemma~\ref{lem:LDP}, we have
	\begin{equation} 
	\label{e:xstar}
	x_*=\exp\left\{\int_0^1\log(1-t)\,H(dt)\right\}\in[0,1-\bar p].
	\end{equation}%SC
	%Furthermore, for $\alpha\in [0,1-\bar{p}]$ the
	%expression in the left hand side of \eqref{e:alpha.x} is strictly
	%decreasing from $-\log x_*$ to 0, and so $\alpha_H(x)$ is well defined
	%for $x\in [x_*,1]$. 
\end{remark}

\begin{proof}
	We check the assumptions of Lemma \ref{lem:LDP}. 
	Note that, for $\theta\in\bbr$, 
	\begin{align*}
	\Lambda_n(\theta):=\log\EE^\bp(e^{\theta x_n/n})
	=\sum_{i=1}^n\log(1-p_i+p_ie^{\theta/n})
	\end{align*} %SC
	so it follows by the assumption \eqref{e:ass.H} that 
	$$
	\frac1n\Lambda_n(n\theta)
	\xrightarrow[n\to\infty]{}\int_0^1\log\bigl(1-x+xe^\theta\bigr)\,H(dx)=:\Lambda(\theta).
	$$
	Since $\Lambda$ is finite and differentiable on $\bbr$, 
	it follows from the G{\"a}rtner-Ellis theorem (see e.g.\@ Theorem 2.3.6 in \cite{Dembo10}) 
	that, under $\PP^\bp$, the sequence $(x_n/n)_{n\ge1}$ satisfies a large deviation principle with speed $n$ and rate function
	$$
	\Lambda^*(\alpha)=\sup_{\theta\in\bbr}\left(\theta\alpha-\int_0^1\log\bigl(1-x+xe^\theta\bigr)\,H(dx)\right)
	$$
	which is the Fenchel--Legendre transform of $\Lambda$.
	In particular, 
	the function $\Lambda^*:\RR\to[0,\infty]$ is convex and has a unique minimum value equal to $0$ at $\Lambda'(0)=\bar p$.
	If $\alpha\in[0,\bar p]$, then the supremum in the definition of $\Lambda^*$ is attained at $\theta=\theta(\alpha)$ 
	and therefore $I(\alpha)=\Lambda^*(\alpha)$.
	It follows that $I$ satisfies the assumptions of Lemma~\ref{lem:LDP}:  
	the function $I:(0,\bar p]\to[0,\infty)$ is continuous, decreasing, 
	and satisfies $I(\bar p)=0$ and \eqref{eq:hyp_LDP} as a consequence of the large deviation principle mentioned above.
	Therefore, we can apply Lemma~\ref{lem:LDP} which yields the result.
\end{proof}

Theorem \ref{th: renorm-bound} immediately allows us to deduce the following result.

\begin{theorem}
	\label{th:renorm-rand-strat}
	{\bf Random stratified case:} 
	Consider the random stratified case with splitting proportions $(P_n)_{n\ge1}$.
	Denote by $H$ the distribution of $P_1$ and assume that $H(\{0,1\})<1$.
	Then $\tilde G_n\Rightarrow\tilde g$ weakly on $[0,1]$, almost surely as $n\to\infty$, 
	where $\tilde g$ is defined as in Theorem~\ref{th: renorm-bound}.
\end{theorem}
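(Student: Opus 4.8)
The plan is to reduce the statement for the random stratified case to the deterministic result, Theorem~\ref{th: renorm-bound}, exactly in the spirit of the proofs of Theorem~\ref{thm:cv_p}(b) and Theorem~\ref{the:renorm-random-full}. The key observation is that, conditionally on the random splitting proportions $(P_n)_{n\ge1}$, we are looking at a deterministic stratified fragmentation with proportions $p_n = P_n(\omega)$, so it suffices to show that, almost surely, the realized sequence $(P_n)_{n\ge1}$ satisfies the hypothesis~\eqref{e:ass.H} of Theorem~\ref{th: renorm-bound}, namely that $\frac1n\sum_{m=1}^n\delta_{P_m}\Rightarrow H$ weakly on $[0,1]$, where $H$ is the common law of $P_1$.

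First I would note that, since $[0,1]$ is compact and metrizable, weak convergence of probability measures on $[0,1]$ is metrized by a countable family of test functions; equivalently, by the portmanteau theorem it is enough to check that $\frac1n\sum_{m=1}^n f(P_m)\to\int_0^1 f\,dH$ for $f$ ranging over a countable convergence-determining class of bounded continuous functions on $[0,1]$ (for instance, a countable dense subset of $C([0,1])$ in the uniform norm, or the functions $x\mapsto x^j$, $j\ge0$). For each fixed such $f$, the random variables $f(P_1),f(P_2),\dots$ are i.i.d.\@ and bounded, so by the strong law of large numbers $\frac1n\sum_{m=1}^n f(P_m)\to\EE(f(P_1))=\int_0^1 f\,dH$ almost surely. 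Taking a countable intersection of these almost sure events, we obtain an event $\Omega^*$ of probability one on which $\frac1n\sum_{m=1}^n\delta_{P_m}\Rightarrow H$ simultaneously for all $n$-indexed sequences, i.e.\@ the Glivenko--Cantelli-type statement that the empirical measures converge weakly to $H$.

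On the event $\Omega^*$, the realized proportions $(P_n(\omega))_{n\ge1}$ form a deterministic sequence satisfying~\eqref{e:ass.H} with this $H$, and since $H(\{0,1\})<1$ by hypothesis, Theorem~\ref{th: renorm-bound} applies verbatim and gives $\tilde g_n\bigl((P_n(\omega))_n\bigr)\Rightarrow\tilde g$ weakly on $[0,1]$, where $\tilde g$ is the measure constructed there from $H$ (and which does not depend on $\omega$). It remains only to identify $\tilde g_n\bigl((P_n(\omega))_n\bigr)$ with $\tilde G_n(\omega)$: by the measurable-function viewpoint recorded after Proposition~\ref{phiPhi}, $A_{n,k}=a_{n,k}(\bPn)$, hence $\tilde A_{n,k}=A_{n,k}^{1/n}=a_{n,k}(\bPn)^{1/n}$ and therefore $\tilde G_n=\tilde g_n(\bPn)$ pointwise; on $\Omega^*$ this is exactly the quantity to which Theorem~\ref{th: renorm-bound} was applied. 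This proves $\tilde G_n\Rightarrow\tilde g$ almost surely, as claimed.

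The proof is essentially routine once the reduction is set up, and I do not anticipate a genuine obstacle; the only point requiring a little care is the quantifier order in the Glivenko--Cantelli step, i.e.\@ passing from ``almost surely, for each $f$'' to ``almost surely, for all $f$ in a convergence-determining class'', which is handled by the standard argument of intersecting countably many full-measure events and then using a density/approximation argument to upgrade to all of $C([0,1])$. One should also remark, as the authors do elsewhere, that $\tilde g$ inherits from Remark~\ref{rem:alpha_I} the properties $\tilde g(\{1\})=1-\bar p$ and atomlessness on $[0,1)$, with $\bar p=\EE(P_1)=\int_0^1 t\,H(dt)\in(0,1)$ guaranteed by $H(\{0,1\})<1$.
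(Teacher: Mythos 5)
Your proposal is correct and follows essentially the same route as the paper: the paper invokes the Glivenko--Cantelli theorem to get the almost sure weak convergence $\frac1n\sum_{m=1}^n\delta_{P_m}\Rightarrow H$, then applies Theorem~\ref{th: renorm-bound} pathwise on that full-measure event, exactly as you do (you merely spell out the Glivenko--Cantelli step via the SLLN over a countable convergence-determining class).
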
 
\begin{proof}
	It is enough to note that, by the Glivenko--Cantelli theorem, as $n\to\infty$, almost surely,
	$$
	\frac1n\sum_{k=1}^n\d_{P_k}\Rightarrow H\q\text{ weakly on $[0,1]$}.
	$$
	Then Theorem \ref{th: renorm-bound} applies on the same set of full probability.
\end{proof} 

We finish this section with a discussion of the fully random case. 
The overall picture of the movement of the mass of $G_n$ towards the endpoints of the interval $[0,1]$ remains the same
as in the deterministic stratified case and the random stratified case. 
However, the assumptions are stronger and less information about the limiting distribution is available. 

\begin{theorem}
	\label{th: renorm-rand-full}
	{\bf Fully random case:} 
	Consider the fully random case with splitting proportions $(P_{n,k}:(n,k)\in I)$.
	Assume that for some $p>2$ we have
	\begin{equation}
	\label{eq:ellip}
	\EE\left(\lvert\log(P_{1,1})\rvert^p\right)<\infty
	\quad\text{and}
	\quad
	\EE\left(\lvert\log(1-P_{1,1})\rvert^p\right)<\infty. 
	\end{equation} 
	Then, for some probability distribution $\tilde g$ on $[0,1]$, 
	we have $\tilde G_n\Rightarrow\tilde g$ weakly on $[0,1]$, almost surely as $n\to\infty$.
	Moreover $\tilde g$ is supported on $[x_*,1]$ for some $x_*\in(0,1)$, $\tilde g$ has no atoms in $[0,1)$ and $\tilde{g} (\{1\}) = 1 - \EE(P_{1,1})$.
\end{theorem}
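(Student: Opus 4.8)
The plan is to reduce the fully random case to the large-deviation framework of Lemma~\ref{lem:LDP}, exactly as Theorems~\ref{th: renorm-bound} and~\ref{th:renorm-rand-strat} did in the stratified cases, but now the walk $(X_n)_{n\ge0}$ is a RWDRE (random walk in dynamic random environment) rather than a sum of independent Bernoulli variables, so the quenched large deviation principle has to be imported from the literature rather than proved by hand. Concretely, I would first invoke a quenched large deviation principle for one-dimensional RWDRE with i.i.d.\ space-time environment: under the moment condition~\eqref{eq:ellip} on $\log P_{1,1}$ and $\log(1-P_{1,1})$ (which in particular forces the ellipticity $\PP(P_{1,1}\in(0,1))>0$), there is a deterministic, convex rate function $J:[0,1]\to[0,\infty]$, with $J$ finite and continuous on a neighbourhood of its zero $\bar p=\EE(P_{1,1})$, such that, almost surely in the environment $\bP$,
$$
\frac1n\log\PP^{\bP}(X_n\le\a n)\to-J(\a)\q\text{for all }\a\in(0,\bar p).
$$
Here the moment hypothesis with exponent $p>2$ is precisely what is needed to control the quenched lower tail: the event $\{X_n\le\a n\}$ for small $\a$ requires the walk to make many $0$-steps (or to be pushed left), so the cost is governed by $\log(1-P_{n,k})$ (and symmetrically $\log P_{n,k}$), and one needs enough integrability on these quantities for a subadditivity argument to give a deterministic limit and for the rate function to behave well near $0$.

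Second, I would check that $J$ satisfies the hypotheses of Lemma~\ref{lem:LDP}: it is decreasing on $(0,\bar p]$ with $J(\bar p)=0$ (convexity plus the unique zero at $\bar p$), and it is continuous on $(0,\bar p]$; then $I:=J|_{(0,\bar p]}$ plays the role of the rate function in the lemma, with $x_*=e^{-I(0^+)}$. One must argue $x_*\in(0,1)$: $x_*<1$ because $I(0^+)>0$, since by ellipticity $\PP^{\bP}(X_n\le\a n)$ decays exponentially for $\a$ below $\bar p$ (indeed even $\PP^{\bP}(X_n=0)=\prod_{k=1}^n(1-P_{k,k})$ decays like $e^{n\EE\log(1-P_{1,1})}$, so $x_*\ge e^{\EE\log(1-P_{1,1})}>0$, giving the lower bound, and for the strict upper bound one needs $I(0^+)>0$, which follows from the LDP upper bound at any $\a\in(0,\bar p)$). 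Third, I would lift the almost-sure statement through Lemma~\ref{lem:LDP}: on the event $\O^*$ of full probability on which the quenched LDP~\eqref{eq:hyp_LDP} holds, Lemma~\ref{lem:LDP} gives $\tilde G_n=\tilde g_n(\bP)\Rightarrow\tilde g$ weakly on $[0,1]$, with $\tilde g$ determined by $\tilde g([0,x])=\a_I(x)$ on $(x_*,1)$; the no-atoms-on-$[0,1)$ and $\tilde g(\{1\})=1-\bar p$ assertions come from Remark~\ref{rem:alpha_I}, since $\a_I$ is continuous and increasing with $\a_I(x_*)=0$, $\a_I(1^-)=\bar p$. As in the proof of Theorem~\ref{the:renorm-random-full}, the passage from the quenched statement ``for all $t$'' to an a.s.\ statement is handled by working with a countable dense set of $\a$'s and using monotonicity of $\a\mapsto\PP^{\bP}(X_n\le\a n)$.

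The main obstacle is the first step: locating and correctly applying a quenched large deviation principle for the lower tail of a one-dimensional RWDRE with i.i.d.\ space-time environment under only the logarithmic moment condition~\eqref{eq:ellip}. Quenched LDPs for RWRE/RWDRE are delicate — the rate function is typically not explicit (which is exactly why the theorem statement says ``less information about the limiting distribution is available''), and one must ensure that the available result applies in the time-inhomogeneous-by-construction but translation-invariant-in-distribution setup here, with the environment being the Bernoulli-type steps driven by $(P_{n,k})$. I expect the cleanest route is a direct subadditivity/concentration argument tailored to our walk: $\log\PP^{\bP}(X_{n+m}\le\a(n+m))$ is, up to boundary corrections, superadditive along the environment shift, so Kingman's subadditive ergodic theorem yields an a.s.\ deterministic limit $-J(\a)$ for each fixed rational $\a$, and the $p>2$ moments give the uniform integrability needed to apply it and to transfer to all $\a\in(0,\bar p)$ by monotonicity and convexity; continuity and monotonicity of $J$ on $(0,\bar p]$ then follow from convexity and the identification of its minimiser via the annealed mean $\bar p$. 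Verifying the technical hypotheses of whichever subadditive-ergodic or LDP result is invoked, and in particular pinning down $x_*\in(0,1)$, is where the real work lies; everything downstream of a clean quenched LDP is a routine application of Lemma~\ref{lem:LDP} and Remark~\ref{rem:alpha_I}.
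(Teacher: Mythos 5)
Your proposal matches the paper's proof: both reduce to Lemma~\ref{lem:LDP} via a quenched large deviation principle for the lower tail of the RWDRE under the moment hypothesis~\eqref{eq:ellip} (the paper cites Corollary~2.3 and Proposition~2.4 of~\cite{Rassoul17}, which deliver precisely the rate function $I_q$ continuous and decreasing on $[0,\bar p]$ with $I_q(\bar p)=0$ and the finiteness of $I_q(0)$ you seek), and then read off the properties of $\tilde g$ from Remark~\ref{rem:alpha_I}. Your direct lower bound for $x_*$ is correct apart from a small notational slip: the quenched probability of remaining at $0$ is $\prod_{k=1}^n(1-P_{k,1})$, not $\prod_{k=1}^n(1-P_{k,k})$, though the conclusion $x_*\ge e^{\EE\log(1-P_{1,1})}>0$ is unaffected.
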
 
\begin{proof}
	Set $\bar p=\EE(P_{1,1})$.
	It follows from e.g.\@ Corollary 2.3 and Proposition 2.4 in \cite{Rassoul17} 
	that there is a function $I_q:[0,\bar p]\to[0,\infty)$, 
	continuous and decreasing with $I_q(\bar p)=0$, and such that, almost surely, for all $\a\in(0,\bar p)$ as $n\to\infty$,
	\begin{equation}  
	\label{eq:hyp_LDPQ}
	\frac1n\log\PP\left(X_n\le\a n|\bP\right)\to-I_q(\alpha).
	\end{equation} 
	We use here the assumption \eqref{eq:ellip}.
	The subscript $q$ here is for `quenched'.
	Hence, on the same event of full probability, 
	we can apply Lemma \ref{lem:LDP} to deduce the claimed weak convergence of $\tilde G_n$.
	The fact that $x_*>0$ in this case follows from the finiteness of $I_q(0)$ guaranteed in \cite{Rassoul17}.
	Stated properties of $\tilde{g}$ follow from Remark \ref{rem:alpha_I}.
\end{proof}

\begin{remark}
	The limit distribution $\tilde g$ in Theorem \ref{th: renorm-rand-full} is given in Lemma~\ref{lem:LDP} in terms of $I_q$.
	Since a full description of the rate function $I_q$ is not available, neither is a full description of $\tilde g$.
	Nonetheless, some information is available in \cite{Rassoul17}.
	Under $\PP$, that is in the annealed case, $(X_n)_{n\ge0}$ is a random walk with i.i.d.\@ Bernoulli($\bar p$) jumps.
	So $(X_n/n)_{n\ge1}$ satisfies a large deviation principle with rate function $I_a$, and for $\alpha\in[0,1]$,
	%Let  $I_a$ be the rate function that the process $(X_n)$ satisfies under the
	%probability measure $\PP$ (the annealed case). 
	%In that case the
	%process is a usual random walk, and so
	\[
	I_a(\alpha)=\alpha\log\frac{\alpha}{\bar p}+(1-\alpha)\log\frac{1-\alpha}{1-\bar p}.
	\]
	Proposition 2.4 in~\cite{Rassoul17} says that $I_a(\alpha)\le I_q(\alpha)$ for $0\le\alpha\le1$, 
	and that the two functions share the same unique global minimum, at the point $\bar p$, where they both vanish. 
	Let
	$$
	x_{*,a}=e^{-I_a(0)}=1-\bar p\,\in(0,1).
	$$
	Define a probability law supported by $[x_{*,a},1]$ by the cumulative distribution function %SC
	$$
	F_a(x)=\frac{\alpha_a(x)}{\bar p}, \ x\in[x_{*,a} ,1]
	$$
	where $\alpha_a(x)$ is the unique solution in $[0,\bar p]$ of the equation %SC
	$$
	I_a(\alpha)=\log(1/x). 
	$$
	Then the law $F_a$ stochastically dominates the limiting distribution in Theorem \ref{th: renorm-rand-full}. 
	Furthermore, unless $P_{1,1}$ is non-random, according to the same reference we have
	$I_a(0)<I_q(0)$, and so we have a strict ordering of the left endpoints of the support: $x_*<1-\bar p$. 
\end{remark}

\section*{Acknowledgments} The authors thank two anonymous referees for useful comments and for pointing out the link with Gelfand--Tsetlin patterns. Professor Cohen would like to thank Labex CIMI for its support.

\bibliographystyle{plain}
\bibliography{biblio}

\end{document}